\numberwithin{equation}{section}
\newtheorem{theorem}[equation]{Theorem}
\newtheorem{lemma}[equation]{Lemma}
\newtheorem*{acknowledge}{Acknowledgments}
\theoremstyle{definition}
\newtheorem{rem}[equation]{Remark}
\newtheorem{example}[equation]{Example}
\newcommand{\la}{\langle}
\newcommand{\ra}{\rangle}
\newcommand{\diver}{{\rm div}}
\newcommand{\vol}{{\rm vol}}
\newcommand{\Ric}{{\rm Ric}}
\newcommand{\Span}{{\rm Span}}
\newcommand{\R}{\mathbb{R}}
\newcommand{\cH}{\mathcal{H}}
\newcommand{\N}{\mathbb{N}}
\newcommand{\be}{\begin{equation*}}
\newcommand{\ee}{\end{equation*}}
\newcommand{\bff}{\begin{proof}}
\newcommand{\ef}{\end{proof}}
\newcommand{\ben}[1]{\begin{equation}\label{#1}}
\newcommand{\een}{\end{equation}}
\newcommand{\bq}{\begin{eqnarray*}}
\newcommand{\bqn}[1]{\begin{eqnarray}\label{#1}}
\newcommand{\eq}{\end{eqnarray*}}
\newcommand{\eqn}{\end{eqnarray}}
\def\sideremark#1{\ifvmode\leavevmode\fi\vadjust{\vbox to0pt{\vss
 \hbox to 0pt{\hskip\hsize\hskip1em                        
  \vbox{\hsize2cm\tiny\raggedright\pretolerance10000         
  \noindent #1\hfill}\hss}\vbox to8pt{\vfil}\vss}}}
\begin{document}
\title[]{A note on Kuttler-Sigillito's inequalities}

\author{Asma Hassannezhad}
\address
{Asma Hassannezhad: University of Bristol,
School of Mathematics,
University Walk,
Bristol BS8 1TW, UK}
\email{asma.hassannezhad@bristol.ac.uk}
\author{Anna Siffert}
\address
{Anna Siffert: Max Planck Institute for Mathematics,
Vivatsgasse 7, 53111 Bonn, Germany}
\email{siffert@mpim-bonn.mpg.de}
\date{\today}
\subjclass[2010]{35P15,58C40,58J50}
\keywords{Steklov eigenvalue problems, estimation of eigenvalues, Rellich identity}

\begin{abstract} 
We provide several inequalities between eigenvalues of some classical eigenvalue problems on domains
with $C^2$ boundary 
 in complete Riemannian manifolds.
A key tool in the proof is the generalized Rellich identity on a Riemannian manifold. Our results in particular extend some inequalities due to Kutller and Sigillito from subsets of $\R^2$ to the manifold setting.
\end{abstract}
\maketitle

\section{Introduction}
The objective of this manuscript is to establish several inequalities between eigenvalues of 
the classical eigenvalue problems mentioned below. Let $(M,g)$ be a complete Riemannian manifold of dimension $n\ge2$ and   $\Omega$  be a bounded domain in $M$ with nonempty $C^2$ boundary $\partial \Omega$. The eigenvalue problems we consider include the  Neumann and Dirichlet eigenvalue problems on $\Omega$:
 \begin{align}
    &\label{dir}  \left\{\begin{array}{ll}\Delta u+\lambda u=0
      \quad \;\; &\mbox{in}\;\; \Omega,\\
  u=0\quad \;\;  &  \mbox{on}\;\; \partial
  \Omega, \end{array} \right. &\qquad\qquad \mbox{Dirichlet eigenvalue problem\,,}\\
  &\left\{ \begin{array}{ll} \Delta u+\mu u=0
      \quad \;\; &\mbox{in}\;\; \Omega,\\
  {\partial_\nu u}=0\quad \;\;  &
   \mbox{on}\;\; \partial \Omega, \end{array} \right.& \qquad\qquad \mbox{Neumann eigenvalue problem\,,}\label{neu} 
  \end{align}
     \noindent where $\Delta={\rm div}\nabla$ is the Laplace--Beltrami operator, $\nu$ is the unit outward normal vector on $\partial \Omega$, and $\partial_\nu$ denotes the outward normal derivative.  
     The Dirichlet eigenvalues describe the fundamental modes of vibration of an idealized drum, and
the Neumann eigenvalues appear naturally in the study of the vibrations of a free membrane; see e.g. \cite{Ber86,Cha84}.
 
 \smallskip 
  
We also consider the Steklov eigenvalue problem, which is an eigenvalue problem with the spectral parameter in the boundary conditions:
  \begin{align}  & \left\{ \begin{array}{ll} \Delta u=0
      \quad \;\; &\mbox{in}\;\; \Omega,\\
  {\partial_\nu u}=\sigma u\quad \;\;  &
   \mbox{on}\;\; \partial \Omega, \end{array} \right.& \qquad\qquad \mbox{Steklov eigenvalue problem\,.}\label{stek} 
     \end{align}
     The Steklov eigenvalues encode the squares of the natural
frequencies of vibration of a thin membrane with free frame, whose
mass is uniformly distributed at the boundary; see the recent survey paper~\cite{GP17} and references therein.

\smallskip

The last set of eigenvalue problems we consider are the so-called  Biharmonic Steklov problems:     
     \begin{align}
  & \label{bihar-i} \left\{ \begin{array}{ll} \Delta^2 u=0 \quad \;\; &\mbox{in}\;\; \Omega,\\
  u=\Delta u-\eta{\partial_\nu u}=0 \quad \;\;  &  \mbox{on}\;\;
   \partial \Omega, \end{array} \right. &\qquad\qquad \mbox{Biharmonic Steklov problem I\,;}&\\
  & \label{bihar-ii} \left\{ \begin{array}{ll} \Delta^2 u=0 \quad \;\; &\mbox{in}\;\; \Omega,\\
  {\partial_\nu u}={\partial_\nu \Delta u}+\xi u=0 \quad \;\;  &  \mbox{on}\;\;
   \partial \Omega,\end{array} \right. &\qquad\qquad \mbox{Biharmonic Steklov problem II\,.}
   \end{align}
   \smallskip
   The eigenvalues problems \eqref{bihar-i} and \eqref{bihar-ii} play an important role in biharmonic analysis and elastic mechanics. We refer the reader to \cite{FGW05,BFG09,Liu(a)11,Liu16} for some recent results on eigenvalue estimates of problem~\eqref{bihar-i}. Moreover, a physical interpretation of problem \eqref{bihar-i} can be found in \cite{FGW05,Liu(a)11}. Problem \eqref{bihar-ii} was first studied in \cite{KS68,Kut82} where the main focus was on the first nonzero eigenvalue, which appears as an optimal constant in a priori inequality; see \cite{Kut82} for more details. 
   \smallskip 
   
It is well-known that the spectra of the eigenvalue problems \eqref{neu}--\eqref{bihar-ii} are discrete and nonnegative. We may thus arrange their eigenvalues in  increasing order, where we repeat an eigenvalue as often as its multiplicity requires. 
The $k$-th eigenvalue of one of the above eigenvalue problems will be denoted by the corresponding letter for the eigenvalue with a subscript $k$, e.g.~the $k$-th Neumann eigenvalue will be denoted by $\mu_k$.
Note that $\mu_1=\sigma_1=\xi_1=0$.

\smallskip

There is a variety of literature on the study of bounds on the eigenvalues of each problem mentioned above in terms of the geometry of the underlying space \cite{LY80,Liu(a)11,RS16,GP17}. However, instead of studying each eigenvalue problem individually, it is also interesting to explore relationships and inequalities between eigenvalues of different eigenvalue problems. Among this type of results, one can mention  the relationships between the Laplace  and Steklov eigenvalues  studied in \cite{WX09,LP15,PS16}, and
various inequalities between the first nonzero eigenvalue of problems \eqref{neu}--\eqref{bihar-ii}  on bounded domains of $\R^2$ obtained  by Kuttler and Sigilito in \cite{KS68}; see Table\,1 (Note that there was a misprint in Inequality VI  in \cite{KS68}. The correct version of the inequality is stated in Table\,1.). 
\begin{center}
\begin{table}[h]
\caption{Inequalities obtained by Kuttler and Sigillito in \cite{KS68}.}
\begin{tabular}{ |c|c|c|c|}
 \hline
 Inequalities & Conditions on $\Omega\subset\R^2$&Special case of  \\
 \hline
   $\mu_2\sigma_2\leq\xi_2$    &&Thm. \ref{kutsig1}\\
   $\mu_2h_{\min}/(1+\mu_2^{1/2}r_{\max})\leq 2\sigma_2$  &star-shaped with respect to a point&Thm. \ref{kutsig2} \\
 $\eta_1\leq\frac{1}{2}\lambda_1h_{\max}$    &star-shaped with respect to a point&Thm. \ref{kutsig3} $(i)$\\
     $\lambda_1^{1/2}\leq 2\eta_1r_{\max}/h_{\min}$  &star-shaped with respect to a point &Thm. \ref{kutsig3} $(i)$\\
 $\xi_2\leq\mu_2^2h_{\max}$    & star-shaped with respect to its centroid&Thm. \ref{kutsig3} $(ii)$\\

 \hline
\end{tabular}
\end{table}
\end{center}
We  extend Kuttler--Sigillito's results in two ways. 
Firstly, we consider domains $\Omega$ with $C^2$ boundary in a complete Riemannian manifolds $(M,g)$ of arbitrary dimension $n\geq 2$.
Secondly, we also prove inequalities between higher-order eigenvalues. 

\smallskip

Our first theorem provides lower bounds for $\xi_k$ in terms of Neumann and Steklov eigenvalues.

\begin{theorem}\label{kutsig1}
For every $k\in \N$ we have 
(a) $\mu_k\sigma_2\le\xi_k$, and
(b)
$\mu_2\sigma_k\le\xi_k
$.
\end{theorem}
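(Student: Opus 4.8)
The plan is to use the variational (min-max) characterizations of $\mu_k$, $\sigma_k$, and $\xi_k$, together with the observation that the Rayleigh quotient defining $\xi_k$ factors through the quotients defining $\mu_k$ and $\sigma_k$. Recall that for the biharmonic Steklov problem II (\ref{bihar-ii}), the eigenvalues admit the characterization
\[
\xi_k=\min_{\substack{E\subset \cH\\ \dim E=k}}\ \max_{0\neq u\in E}\ \frac{\int_\Omega (\Delta u)^2\,dv}{\int_{\partial\Omega} u^2\,ds},
\]
where $\cH$ is the appropriate space of $H^2(\Omega)$ functions with $\partial_\nu u=0$ on $\partial\Omega$ (and one restricts to functions with $\int_{\partial\Omega}u\,ds=0$, or equivalently works with $k\ge 2$ and the trivial eigenfunction spanning the top of the spectrum; the index bookkeeping should be handled carefully since $\xi_1=0$). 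The key algebraic identity is that for such a $u$, setting $f=\Delta u$, one has $f\in L^2(\Omega)$ and the quotient $\int_\Omega f^2 / \int_{\partial\Omega}u^2$ can be split as
\[
\frac{\int_\Omega f^2\,dv}{\int_{\partial\Omega}u^2\,ds}
=\frac{\int_\Omega f^2\,dv}{\int_\Omega u^2\,dv}\cdot\frac{\int_\Omega u^2\,dv}{\int_{\partial\Omega}u^2\,ds}.
\]

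For part (a), I would argue as follows. Take the first $k$ Neumann eigenfunctions $u_1,\dots,u_k$ (so $\Delta u_j=-\mu_j u_j$, $\partial_\nu u_j=0$); these lie in $\cH$ and span a $k$-dimensional space $E$. For any $u=\sum c_j u_j\in E$ we get $\Delta u=-\sum c_j\mu_j u_j$, hence by orthogonality $\int_\Omega(\Delta u)^2 = \sum c_j^2\mu_j^2\,\|u_j\|^2 \ge \mu_k\sum c_j^2\mu_j\|u_j\|^2$ when... actually more carefully: $\int_\Omega(\Delta u)^2=\sum c_j^2\mu_j^2\|u_j\|_{L^2}^2$ while $\int_\Omega|\nabla u|^2 = -\int_\Omega u\Delta u = \sum c_j^2\mu_j\|u_j\|_{L^2}^2$, so $\int_\Omega(\Delta u)^2\ge\mu_k\int_\Omega|\nabla u|^2$ is false in general — one needs $\int_\Omega(\Delta u)^2\ge \mu_k \int_\Omega |\nabla u|^2$ only when all $\mu_j\ge\mu_k$, which is backwards. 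Instead use $\int_\Omega(\Delta u)^2 = \sum c_j^2\mu_j^2\|u_j\|^2$ and $\int_\Omega u^2=\sum c_j^2\|u_j\|^2$, giving $\int_\Omega(\Delta u)^2\le \mu_k^2\int_\Omega u^2$ on $E$ — also the wrong direction for a lower bound on $\xi_k$. So the correct test space for a \emph{lower} bound on $\xi_k$ must come from a max-min (Courant–Fischer from above) argument, or one tests with Steklov/Neumann data differently; I will instead prove the contrapositive-style bound: using the characterization $\xi_k=\max_{\mathrm{codim}\,F=k-1}\min_{u\in F}\frac{\int(\Delta u)^2}{\int_{\partial\Omega}u^2}$ and choosing $F$ to be the $L^2(\Omega)$-orthogonal complement of the first $k-1$ Neumann eigenfunctions within $\cH$. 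On such $F$, expanding in the full Neumann basis gives $\int_\Omega(\Delta u)^2\ge\mu_k\int_\Omega|\nabla u|^2$, and then $\int_\Omega|\nabla u|^2\ge \sigma_2\int_{\partial\Omega}u^2$ holds for $u$ with $\int_{\partial\Omega}u\,ds=0$ by the variational characterization of $\sigma_2$ (the Steklov Rayleigh quotient); combining the two yields $\int_\Omega(\Delta u)^2\ge \mu_k\sigma_2\int_{\partial\Omega}u^2$ on $F$, hence $\xi_k\ge\mu_k\sigma_2$.

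For part (b), the roles are swapped: I would take $F$ to be the $L^2(\partial\Omega)$-orthogonal complement (within $\cH$) of the first $k-1$ Steklov eigenfunctions together with the constants, so that on $F$ one has $\int_\Omega|\nabla u|^2\ge\sigma_k\int_{\partial\Omega}u^2$; and separately, since any $u\in\cH$ satisfies $\partial_\nu u=0$, the Neumann characterization of $\mu_2$ gives $\int_\Omega(\Delta u)^2\ge\mu_2\int_\Omega|\nabla u|^2$ (using $\int_\Omega(\Delta u)^2=\int_\Omega|\nabla u|^2$-type integration by parts twice and the fact that $u$ is $L^2(\Omega)$-orthogonal to constants after subtracting its mean, which doesn't change $\Delta u$ or the boundary integral). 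Chaining these two inequalities gives $\xi_k\ge\mu_2\sigma_k$.

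The main obstacle I anticipate is the careful handling of the index shift and the orthogonality constraints — in particular making sure the test subspace $F$ has the right codimension, that subtracting the mean of $u$ (to exploit $\mu_2$ or $\sigma_2$) is legitimate within $\cH$ and preserves the relevant quantities, and that $\xi_1=0$ together with the constant eigenfunction is correctly accounted for so that the dimension count in Courant–Fischer delivers $\xi_k$ and not $\xi_{k-1}$ or $\xi_{k+1}$. The two integration-by-parts steps ($\int_\Omega(\Delta u)^2$ versus $\int_\Omega|\nabla u|^2$ versus $\int_{\partial\Omega}u^2$) are routine given $\partial_\nu u=0$, but one must verify no boundary terms are dropped; this is where $C^2$ regularity of $\partial\Omega$ is used.
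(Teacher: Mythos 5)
Your overall strategy---bounding $\xi_k$ from below via a Courant--Fischer max--min over codimension-$(k-1)$ subspaces built from Neumann (resp.\ Steklov) eigenfunctions---is the dual of the paper's argument, which instead feeds the $\xi$-eigenfunctions as a test space into the min--max characterizations \eqref{neuvar} and \eqref{stekvar} of $\mu_k$ and $\sigma_k$, using $\int_\Omega(\Delta u)^2\le\xi_k\int_{\partial\Omega}u^2\,ds_g$ on that span. Part (b) of your argument does go through: in fact $\int_\Omega(\Delta u)^2\,dv_g\ge\mu_2\int_\Omega|\nabla u|^2\,dv_g$ holds for \emph{every} $u\in H^2_N(\Omega)$, since the constant mode contributes $\mu_1=0$ to both sides of the Neumann-basis expansion, so no mean subtraction is needed at all. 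The one caveat for both parts is that you invoke a max--min characterization of $\xi_k$ that is not among the characterizations in Section~\ref{vc} and must be justified separately for this generalized eigenvalue problem, whose denominator $\int_{\partial\Omega}u^2\,ds_g$ is only a seminorm on $H^2_N(\Omega)$.

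In part (a), however, there is a concrete gap. Your subspace $F$ is the $L^2(\Omega)$-orthogonal complement of the first $k-1$ Neumann eigenfunctions, so a generic $u\in F$ satisfies $\int_\Omega u\,dv_g=0$ but \emph{not} $\int_{\partial\Omega}u\,ds_g=0$; the inequality $\int_\Omega|\nabla u|^2\,dv_g\ge\sigma_2\int_{\partial\Omega}u^2\,ds_g$ therefore does not apply on $F$, and stacking the boundary mean-zero constraint on top of your $k-1$ orthogonality conditions raises the codimension to $k$, which would only deliver $\xi_{k+1}\ge\mu_k\sigma_2$. The repair is to notice that orthogonality to the constant Neumann eigenfunction $u_1$ is not needed for the step $\int_\Omega(\Delta u)^2\,dv_g\ge\mu_k\int_\Omega|\nabla u|^2\,dv_g$ (again because $\mu_1=0$ annihilates the constant component in both sums), so you should define $F$ by the $k-1$ conditions $\int_{\partial\Omega}u\,ds_g=0$ and $\langle u,u_j\rangle_{L^2(\Omega)}=0$ for $j=2,\dots,k-1$; on this $F$ both inequalities hold and the codimension count is correct. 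With that modification, and a proof of the max--min for $\xi_k$, your argument closes; the paper's route is shorter because it only ever tests min--max characterizations from above and so never needs the max--min machinery.
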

Compared to inequality~(b), inequality~(a) gives a better lower bound for $\xi_k$ for large $k$.  
For $k=2$ and $\Omega\subset\R^2$, Theorem\,\ref{kutsig1} was previously proved in \cite{KS68}.  Kuttler in \cite{Kut82} also obtained an inequality between some higher order eigenvalues $\xi_k$ and  $\mu_k$ for a rectangular domain in $\R^2$ using symmetries of the eigenfunctions. 
\smallskip

In order to state our next results, we need to introduce some notation first.
 For any given $p\in M,$ consider the  distance function $$d_p:\Omega\to[0,\infty),\quad d_p(x):=d(p,x),$$ and  one half of the square of the distance function,  $$\rho_p(x):=\frac{1}{2}d_p(x)^2.$$  Furthermore, we set $$r_{\max}:=\max_{x\in\Omega}d_p(x)=\max_{x\in\partial\Omega}d_p(x),$$
$$h_{\max}:=\max_{x\in\partial\Omega}\la\nabla\rho_p,\nu\ra,\quad\text{and}\quad h_{\min}:=\min_{x\in\partial\Omega}\la\nabla\rho_p,\nu\ra,$$
where we borrowed the notation from \cite{KS68}.

\smallskip

%
%
We shall see that under the assumption of a lower Ricci curvature bound, there exists a lower bound on the first nonzero Steklov eigenvalue $\sigma_2$ in terms of  $\mu_2$ on star shaped domains.
\begin{theorem}\label{kutsig2}
 Let the Ricci curvature ${\rm Ric}_g$ of the ambient space $M$ be bounded from below  $${\rm Ric}_g\ge(n-1)\kappa,$$ and let $\Omega\subset M$ be a bounded star shaped domain with respect to $p\in\Omega$. Then we have
\begin{align}\label{sigmamu}
\sigma_2\ge \frac{h_{\min}\mu_2}{2r_{\max}\mu_2^{1/2}+C_0}, 
\end{align}
where $C_0:=C_0(n,\kappa, r_{\max})$ is a positive constant depending only on $n,\kappa$ and $r_{\max}$.
\end{theorem}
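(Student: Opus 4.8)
The target is a \emph{lower} bound for $\sigma_2$, so I would run the variational characterization in the direction that produces such a bound: take a harmonic test function whose boundary trace is orthogonal to constants, and estimate its Dirichlet energy from above by a multiple of its boundary $L^2$-norm. Concretely, let $u_2$ be a second Neumann eigenfunction (eigenvalue $\mu_2$), and let $v$ solve the Dirichlet problem $\Delta v=0$ in $\Omega$, $v=u_2$ on $\partial\Omega$. Since $\int_{\partial\Omega}u_2$ need not vanish, replace $u_2$ by $u_2-c$ with $c$ chosen so that $\int_{\partial\Omega}(u_2-c)=0$; this does not change $\Delta u_2$ or its normal derivative, and the harmonic extension of the constant is that same constant, so I may assume $\int_{\partial\Omega}u_2=0$ and that $v$ is its harmonic extension. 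Then
\begin{align}\label{eq:varplan}
\sigma_2 \;\le\; \frac{\int_\Omega|\nabla v|^2}{\int_{\partial\Omega}v^2} \;=\;\frac{\int_{\partial\Omega}v\,\partial_\nu v}{\int_{\partial\Omega}u_2^2},
\end{align}
so a lower bound for $\sigma_2$ requires instead an \emph{upper} bound on $\int_\Omega|\nabla v|^2$ together with a \emph{lower} bound on $\int_{\partial\Omega}u_2^2$; equivalently, I want to bound $\int_\Omega|\nabla v|^2$ from above by $\int_{\partial\Omega}u_2^2$ times the right constant. The harmonic extension minimizes Dirichlet energy, so $\int_\Omega|\nabla v|^2\le\int_\Omega|\nabla u_2|^2=\mu_2\int_\Omega u_2^2$. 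Thus \eqref{eq:varplan} would give $\sigma_2$ from below once I control $\int_\Omega u_2^2$ by $\int_{\partial\Omega}u_2^2$ — i.e., once I have a reverse trace inequality $\int_\Omega u_2^2 \le C\,\int_{\partial\Omega}u_2^2$ with an explicit $C$ depending on the stated geometric data.

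\textbf{The reverse trace estimate via the Rellich identity.} This is where the star-shapedness, the vector field $\nabla\rho_p$, and the generalized Rellich identity (the key tool advertised in the abstract and, I assume, established earlier in the paper) enter. Applying the Rellich/Pohozaev-type identity to $u_2$ with the vector field $X=\nabla\rho_p$ yields a relation of the schematic form
\begin{align}\label{eq:rellichplan}
\int_{\partial\Omega}\la X,\nu\ra\,u_2^2
\;=\; -\frac{2}{\mu_2}\int_\Omega (\diver X)\,|\nabla u_2|^2\;+\;\frac{4}{\mu_2}\int_\Omega \mathrm{Hess}\,\rho_p(\nabla u_2,\nabla u_2)\;+\;(\text{lower-order terms}),
\end{align}
using $\partial_\nu u_2=0$ on $\partial\Omega$ to kill the boundary gradient terms, and $\Delta u_2=-\mu_2 u_2$ to convert $\int_\Omega X(u_2)\,\Delta u_2$ into $-\tfrac{\mu_2}{2}\int_\Omega X(u_2^2)=\tfrac{\mu_2}{2}\int_\Omega(\diver X)u_2^2-\tfrac{\mu_2}{2}\int_{\partial\Omega}\la X,\nu\ra u_2^2$ after integration by parts. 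The point is that $\diver(\nabla\rho_p)=\Delta\rho_p$ and $\mathrm{Hess}\,\rho_p$ are controlled on $\Omega$ by the Hessian comparison theorem under the hypothesis $\Ric_g\ge(n-1)\kappa$ and $d_p\le r_{\max}$: one gets pointwise bounds $\Delta\rho_p \le C_1(n,\kappa,r_{\max})$ and $0\le \mathrm{Hess}\,\rho_p \le C_2(n,\kappa,r_{\max})\,g$ in the appropriate (barrier/distributional) sense. Feeding these into \eqref{eq:rellichplan}, together with $\la X,\nu\ra=\la\nabla\rho_p,\nu\ra\ge h_{\min}>0$ on the left (star-shapedness) and $\le h_{\max}$ — and using $\int_\Omega|\nabla u_2|^2=\mu_2\int_\Omega u_2^2$, and the crude bound $|X|=d_p\le r_{\max}$ wherever a first-order term survives — I obtain
\begin{align*}
h_{\min}\int_{\partial\Omega}u_2^2 \;\le\; \big(2\,C + \text{(something)}\cdot r_{\max}\,\mu_2^{1/2}\big)\int_\Omega u_2^2,
\end{align*}
where the $\mu_2^{1/2}$ appears from any term that is genuinely first order in $\nabla u_2$ (estimated by Cauchy–Schwarz against $\int_\Omega u_2^2$, producing $\|\nabla u_2\|_{L^2}\|u_2\|_{L^2}=\mu_2^{1/2}\|u_2\|_{L^2}^2$). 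Rearranging gives exactly $\int_\Omega u_2^2 \le \dfrac{2r_{\max}\mu_2^{1/2}+C_0}{h_{\min}}\cdot\dfrac{1}{?}\int_{\partial\Omega}u_2^2$ after also accounting for the mean-value normalization; combining with $\sigma_2\le\mu_2\int_\Omega u_2^2/\int_{\partial\Omega}u_2^2$ from \eqref{eq:varplan} yields
\begin{align*}
\sigma_2 \;\ge\; \frac{h_{\min}\,\mu_2}{2r_{\max}\mu_2^{1/2}+C_0},
\end{align*}
which is \eqref{sigmamu}, with $C_0=C_0(n,\kappa,r_{\max})$ collecting the comparison constants.

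\textbf{Main obstacle.} The delicate point is the regularity of $\rho_p$: the distance function (hence $\rho_p$) is only Lipschitz, not $C^2$, beyond the cut locus of $p$, so $\mathrm{Hess}\,\rho_p$ and $\Delta\rho_p$ must be interpreted in the barrier or distributional sense, and one must check that the Rellich identity \eqref{eq:rellichplan} survives this — typically by working with the smooth approximations $\rho_p+\varepsilon$-mollifications or by using Calabi's trick (replacing $d_p$ near a cut point by a slightly shortened broken-geodesic distance from a nearby point), then passing to the limit. The inequality direction of the Hessian comparison is favorable for our estimate (upper bounds on $\mathrm{Hess}\,\rho_p$ and $\Delta\rho_p$ are exactly what we need to bound the right side of \eqref{eq:rellichplan} from above, and these hold in the distributional sense globally), so the argument goes through, but I would want to state carefully which version of the Rellich identity from the earlier part of the paper applies and confirm it is formulated for merely Lipschitz vector fields or for $C^2$ fields with a density/approximation argument appended. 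The bookkeeping of the mean-value normalization constant $c$ (and checking it does not degrade the inequality, e.g. via $\int_{\partial\Omega}(u_2-c)^2\ge\int_{\partial\Omega}u_2^2-|\partial\Omega|c^2$ and a Poincaré-type control of $c$) is routine but must be done to land precisely the constant $C_0(n,\kappa,r_{\max})$ without spurious dependence on $\mu_2$ or $|\partial\Omega|$.
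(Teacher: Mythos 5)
Your plan fails at the decisive step: the variational principle is used in the wrong direction. Plugging the harmonic extension $v$ of a (shifted) Neumann eigenfunction into the Steklov Rayleigh quotient gives only $\sigma_2\le \int_\Omega|\nabla v|^2\,dv_g/\int_{\partial\Omega}v^2\,ds_g$, and bounding that right-hand side from above can never produce the claimed conclusion; your final sentence, ``combining with $\sigma_2\le\mu_2\int_\Omega u_2^2/\int_{\partial\Omega}u_2^2$ yields $\sigma_2\ge\cdots$'', is a non sequitur, since both inequalities point the same way. The roles of the two problems must be exchanged. The paper takes $u$ an eigenfunction of $\sigma_2$, so that $\int_\Omega|\nabla u|^2\,dv_g=\sigma_2\int_{\partial\Omega}u^2\,ds_g$ holds as an \emph{identity}, sets $w=u-\vol(\Omega)^{-1}\int_\Omega u\,dv_g$, and uses the Neumann variational characterization as the inequality $\int_\Omega w^2\,dv_g\le\mu_2^{-1}\int_\Omega|\nabla w|^2\,dv_g$. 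Then the elementary divergence identity $\tfrac12\int_{\partial\Omega}w^2\langle\nabla\rho_p,\nu\rangle\,ds_g=\int_\Omega w\langle\nabla w,\nabla\rho_p\rangle\,dv_g+\tfrac12\int_\Omega w^2\Delta\rho_p\,dv_g$ (no full Rellich identity is needed for this theorem), the Laplace comparison $\Delta\rho_p\le 1+d_pH_\kappa(d_p)$, Cauchy--Schwarz with $|\nabla\rho_p|=d_p\le r_{\max}$, and finally $\int_\Omega|\nabla w|^2\,dv_g=\sigma_2\int_{\partial\Omega}u^2\,ds_g\le\sigma_2\int_{\partial\Omega}w^2\,ds_g$ (valid because $\int_{\partial\Omega}u\,ds_g=0$) give $\tfrac12 h_{\min}\int_{\partial\Omega}w^2\,ds_g\le\bigl(r_{\max}\mu_2^{-1/2}+\tfrac12 C_0\mu_2^{-1}\bigr)\sigma_2\int_{\partial\Omega}w^2\,ds_g$, which is exactly \eqref{sigmamu}. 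Your ``reverse trace inequality'' is essentially this computation, but applied to the Neumann eigenfunction, from which nothing about a lower bound for $\sigma_2$ can be concluded.

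Two further problems with the intermediate step. First, $\partial_\nu u_2=0$ kills the boundary term $\int_{\partial\Omega}\partial_\nu u_2\langle F,\nabla u_2\rangle\,ds_g$ in the Rellich identity but \emph{not} $\tfrac12\int_{\partial\Omega}|\nabla u_2|^2\langle F,\nu\rangle\,ds_g$ (the tangential gradient survives), and this nonnegative, uncontrolled term sits on the side you need to bound from above. Second, pointwise two-sided control of $\mathrm{Hess}\,\rho_p$ requires sectional curvature bounds, as in Theorem \ref{kutsig3}; the hypothesis of Theorem \ref{kutsig2} is only a Ricci lower bound, which yields the Laplace comparison and nothing more --- and that is all the paper's proof uses, in distributional form, which also disposes of your cut-locus concern since $\Delta\rho_p$ is only integrated against $w^2\ge0$. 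If one keeps the Neumann eigenfunction as the central object and runs the argument coherently, what comes out is an upper bound for Steklov-type eigenvalues in terms of $\mu_k$, in the spirit of Theorem \ref{kutsig3}\,(ii) and Example \ref{rncase}, not the lower bound asserted here.
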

\noindent When the ambient space $M$ is Euclidean, inequality \eqref{sigmamu} was stated in \cite{KS68} with $C_0=2$.  \\

In the following theorem we provide several inequalities for eigenvalues of \eqref{neu}--\eqref{bihar-ii} on star shaped domains under the assumption of bounded sectional curvature.
Here and hereafter, we make use of the notation $$A\vee B:=\max\{A,B\}\qquad\mbox{for all}\,\,A,B\in \R,$$ and the
convention $c/0=+\infty$, $c\in\R\smallsetminus\{0\}$.  

\smallskip

\begin{theorem}\label{kutsig3} 
Let the sectional curvature $K_g$  of the ambient space $M$ satisfy $\kappa_1\le K_g\le\kappa_2.$
Moreover, let $\Omega\subset M$ be a star shaped domain with respect to $p\in \Omega$ which is contained in the complement of the cut locus of $p$. Then there exist constants $C_i:=C_i(n,\kappa_1,\kappa_2,r_{\max})$, i=1,2, depending only on $n$, $\kappa_1,\kappa_2$ and  $r_{\max}$ and $C_3=C_3(n,\kappa_1,r_{\max})$ such that\smallskip

\begin{itemize}
\setlength\itemsep{1em}
\item[i)]${C_1\eta_m}/{h_{\max}\le \lambda_k}\le\left(4r_{\max}^2\eta_k^2-2C_2 h_{\min}\eta_k\right)/{h_{\min}^2},$
where $m$ is the multiplicity of $\lambda_k$;
\item[ii)] 
$\xi_{m+1}\le
{h_{\max}\mu_k^2}/\left((C_3- n^{-1}\vol(\Omega)^{-1}\mu_k\int_\Omega d_p^2\,dv_g)\vee0\right)$,
{provided $\kappa_2\le0$}.
\end{itemize}
\end{theorem}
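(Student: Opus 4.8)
The plan is to prove the two parts of Theorem~\ref{kutsig3} by exploiting the generalized Rellich identity together with test functions built out of the square of the distance function $\rho_p$, following the philosophy of the Kuttler--Sigillito estimates but replacing Euclidean computations with comparison-geometry estimates for $\nabla\rho_p$ and $\Delta\rho_p$. The key preliminary input I would isolate as a lemma is: under $\kappa_1\le K_g\le\kappa_2$ and on the complement of the cut locus of $p$, one has pointwise bounds
\begin{equation*}
n_{\kappa_2}(d_p)\le \Delta\rho_p=1+d_p\cdot(\text{radial Laplacian})\le n_{\kappa_1}(d_p),\qquad |\nabla\rho_p|=d_p\le r_{\max},
\end{equation*}
where $n_\kappa(t)$ denotes the Laplacian of the distance squared in the constant-curvature model; these follow from the Hessian comparison theorem. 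This is what produces the dimensional constants $C_i(n,\kappa_1,\kappa_2,r_{\max})$ and $C_3(n,\kappa_1,r_{\max})$.

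For part~(i), the upper bound on $\lambda_k$ is proved by using the first biharmonic-Steklov eigenfunctions of \eqref{bihar-i} (the ones realizing $\eta_1,\dots,\eta_k$, spanning a $k$-dimensional space) as trial functions for the Dirichlet problem \eqref{dir}, after multiplying by $\rho_p$ or rather using $\rho_p$ to run a Rellich-type integration by parts: the Rellich identity converts the boundary quantity $\partial_\nu u$ appearing in the definition of $\eta$ into the interior Dirichlet energy, and the star-shapedness hypothesis $\la\nabla\rho_p,\nu\ra\ge h_{\min}>0$ on $\partial\Omega$ controls the boundary terms from below while $d_p\le r_{\max}$ controls them from above. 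Running the min-max principle over this $k$-dimensional space gives $\lambda_k$ bounded by the displayed quadratic expression in $\eta_k$. For the lower bound $C_1\eta_m/h_{\max}\le\lambda_k$, one argues in the reverse direction: take the eigenfunctions for $\lambda_1,\dots,\lambda_k$, observe they vanish on $\partial\Omega$, and use them (again via a Rellich identity applied to $\rho_p$ times the eigenfunction, or a suitable product) to estimate the Rayleigh quotient for \eqref{bihar-i}; the subtlety explaining the multiplicity $m$ of $\lambda_k$ is that one actually obtains a test space of dimension $k$ for $\eta$, but the first $m-1$ of these may collapse, so the conclusion is phrased for $\eta_m$.

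For part~(ii), the strategy is to produce, from the first $k$ Neumann eigenfunctions $u_1,\dots,u_k$ of \eqref{neu}, an $(m+1)$-dimensional trial space for the biharmonic-Steklov II problem \eqref{bihar-ii}. The natural candidates are the functions $v_j$ solving $\Delta v_j = c_j$ with the right Neumann data, or more directly one tests the Rayleigh quotient of \eqref{bihar-ii} — which is $\int_\Omega(\Delta v)^2/\int_{\partial\Omega}v^2$ subject to $\partial_\nu v=0$ — using $v=u_j$ itself together with a Rellich identity to rewrite $\int_{\partial\Omega}u_j^2$ in terms of interior integrals involving $\rho_p$. Here the hypothesis $\kappa_2\le0$ enters: it forces $\Delta\rho_p\ge n$ (the model-space lower bound at nonpositive curvature being $n$), which is exactly what makes the coefficient $C_3 - n^{-1}\vol(\Omega)^{-1}\mu_k\int_\Omega d_p^2\,dv_g$ a legitimate, sign-controlled lower bound for a boundary integral; the $\vee\,0$ and the $c/0=+\infty$ convention handle the case where this quantity degenerates. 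The main obstacle I anticipate is bookkeeping the Rellich identity correctly in the Riemannian setting — the generalized Rellich identity produces a curvature term and a term involving the Hessian of $\rho_p$ contracted against $\nabla u\otimes\nabla u$, and one must bound this error term uniformly using $\kappa_1\le K_g$ (equivalently the Hessian comparison lower bound $\mathrm{Hess}\,\rho_p\ge (\text{something})\,g$ in the radial complement of directions, plus $=1$ radially) without losing the delicate dependence on $h_{\min},h_{\max},r_{\max}$; keeping the constants clean, and in particular checking that in the Euclidean case $\kappa_1=\kappa_2=0$ these reduce to the sharp Kuttler--Sigillito constants, is the part that needs care rather than ingenuity.
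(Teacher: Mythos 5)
Your overall strategy --- the generalized Rellich identity with $F=\nabla\rho_p$, the Hessian and Laplace comparison theorems to control $\diver F=\Delta\rho_p$ and $DF=\nabla^2\rho_p$, and the variational characterizations of Section~\ref{vc} --- is the paper's, and your account of the upper bound in (i) and of the role of $\kappa_2\le0$ in part (ii) (forcing $\Delta\rho_p\ge n$ and $\nabla^2\rho_p\ge g$, hence $\alpha=1$, $c_1=n$) is essentially right.

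However, there is a genuine gap in your argument for the lower bound $C_1\eta_m/h_{\max}\le\lambda_k$. You propose to test \eqref{bihar1var} with the span of the eigenfunctions of $\lambda_1,\dots,\lambda_k$ and explain the appearance of $m$ by saying that ``the first $m-1$ of these may collapse.'' That is not what happens, and the $k$-dimensional test space does not work: the crucial Rellich step is
\begin{equation*}
2\int_\Omega\Delta u\,\la\nabla\rho_p,\nabla u\ra\,dv_g=-\lambda\int_\Omega\la\nabla\rho_p,\nabla u^2\ra\,dv_g=\lambda\int_\Omega u^2\,\Delta\rho_p\,dv_g,
\end{equation*}
which requires $\Delta u=-\lambda u$ for a \emph{single} $\lambda$; for a combination of eigenfunctions with distinct eigenvalues the first term produces cross terms that cannot be bounded below by $\lambda_k C_1\int_\Omega u^2\,dv_g$. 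The correct test space is the eigenspace $E_k$ of $\lambda_k$ alone, whose dimension is $m$, the multiplicity of $\lambda_k$; this is the true source of $\eta_m$. Moreover, for $E_k$ to be admissible in \eqref{bihar1var} one must check that the denominator $\int_{\partial\Omega}(\partial_\nu u)^2\,ds_g$ does not vanish for $u\in E_k\smallsetminus\{0\}$, i.e.\ that $\partial_\nu u_1,\dots,\partial_\nu u_m$ are linearly independent on $\partial\Omega$; the paper proves this by extending a putative $u$ with $u=\partial_\nu u=0$ on $\partial\Omega$ by zero outside $\Omega$ and invoking unique continuation. Your proposal omits this step. The same multiplicity issue recurs in part (ii): there $m$ is the multiplicity of $\mu_k$, and the test space is the $m$-dimensional $\mu_k$-eigenspace corrected to have $\phi$-weighted mean zero on $\partial\Omega$ with $\phi=\la\nabla\rho_p,\nu\ra$ (whence $\xi_{m+1}$ rather than $\xi_{k+1}$, and the factor $h_{\max}=\|\phi\|_\infty$ from comparing the weighted and unweighted Rayleigh quotients), not an $(m+1)$-dimensional space built from the first $k$ Neumann eigenfunctions.
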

Note that the constants $C_i$, $i=1,2,3$ are not positive in general. However, there exists $r_0:=r_0(n,\kappa_1,\kappa_2)>0$ such that for $r_{\max}\le r_0$ these constants are positive; see Section\,\ref{KSinequalities} for details. In inequality $ii)$, we have a non trivial upper bound only if $$\mu_k< nC_3 \vol(\Omega)\left(\int_\Omega d_p^2\,dv_g\right)^{-1}.$$  When $\Omega$ is a domain in $\R^n$, the quantity $\int_\Omega d_p^2 dv_g$ is called the second moment of inertia; see Example \ref{rncase}.
The proof of Theorem \ref{kutsig3} also leads to a non-sharp lower bound on $\eta_1$ $$\eta_1\ge\frac{h_{\min}C_2}{r^2_{\max}}.$$ 
This in particular shows that the right-hand side of the inequality in part $i)$ is always positive.

\smallskip

The proof of Theorem \ref{kutsig1} is based on using the variational characterization of the eigenvalues and alternative formulations thereof.  Apart from the Laplace and Hessian comparison theorems, and the variational characterization of the eigenvalues, the key tool in the proof of Theorems\,\ref{kutsig2} and \ref{kutsig3} is a generalization of the classical Rellich identity to the manifold setting.  This is the content of the next theorem.

\begin{theorem}[Generalized Rellich identity]\label{nr}
Let $F:\Omega\to T\Omega$ be a Lipschitz vector field on $\Omega$. Then for every  $w\in C^2(\Omega)$ we have
\begin{multline*}
\int_{\Omega}(\Delta w+\lambda w) \langle F,\nabla w\rangle dv_g
=\int_{\partial\Omega}{\partial_\nu w}\langle F,\nabla w\rangle ds_g-\frac{1}{2}\int_{\partial\Omega}\vert\nabla w\vert^2 \langle F,\nu\rangle ds_g+\frac{\lambda}{2}\int_{\partial \Omega}w^2 \langle F,\nu \rangle ds_g\\
+\frac{1}{2}\int_{\Omega}\diver F\lvert \nabla w\lvert^2dv_g-\int_{\Omega}DF(\nabla w,\nabla w)dv_g
-\frac{\lambda}{2}\int_{\Omega}w^2\diver F\, dv_g,
\end{multline*}
where $\nu$ denotes the outward pointing normal and $\langle \,\cdot\,,\,\cdot\, \rangle=g( \,\cdot\,,\,\cdot\,)$.
\end{theorem}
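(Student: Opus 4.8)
The plan is to derive the identity by integrating the divergence of a well-chosen one-form and then bookkeeping the boundary terms via the divergence theorem. The natural candidate is the ``energy-momentum'' type vector field associated to $w$ and $F$. Concretely, I would start from the pointwise identity
\[
\diver\!\left(\langle F,\nabla w\rangle\nabla w-\tfrac12|\nabla w|^2 F+\tfrac{\lambda}{2}w^2 F\right)
=(\Delta w+\lambda w)\langle F,\nabla w\rangle+\tfrac12|\nabla w|^2\diver F-DF(\nabla w,\nabla w)-\tfrac{\lambda}{2}w^2\diver F,
\]
and then integrate over $\Omega$, applying the divergence theorem to the left-hand side to produce exactly the three boundary integrals in the statement (using $\langle\nabla w,\nu\rangle=\partial_\nu w$ on $\partial\Omega$).

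The key computational step is verifying that pointwise divergence identity. First I would expand $\diver(\langle F,\nabla w\rangle\nabla w)=\langle\nabla\langle F,\nabla w\rangle,\nabla w\rangle+\langle F,\nabla w\rangle\Delta w$, and then compute $\langle\nabla\langle F,\nabla w\rangle,\nabla w\rangle$ in terms of covariant derivatives: writing $\langle F,\nabla w\rangle$ and differentiating, one gets a term $\mathrm{Hess}\,w(F,\nabla w)$ together with $DF(\nabla w,\nabla w)$ (here $DF$ is the covariant derivative of $F$ viewed as a $(1,1)$-tensor paired against $\nabla w\otimes\nabla w$). Next, $\diver(\tfrac12|\nabla w|^2 F)=\tfrac12|\nabla w|^2\diver F+\tfrac12\langle\nabla|\nabla w|^2,F\rangle=\tfrac12|\nabla w|^2\diver F+\mathrm{Hess}\,w(\nabla w,F)$, and since the Hessian is symmetric, the two $\mathrm{Hess}\,w$ contributions cancel when we take the difference. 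Finally $\diver(\tfrac{\lambda}{2}w^2F)=\tfrac{\lambda}{2}w^2\diver F+\lambda w\langle F,\nabla w\rangle$, and the term $\lambda w\langle F,\nabla w\rangle$ combines with $\langle F,\nabla w\rangle\Delta w$ to give $(\Delta w+\lambda w)\langle F,\nabla w\rangle$. Collecting everything yields precisely the asserted pointwise identity.

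I expect the main (though mild) obstacle to be the justification of the divergence theorem under the stated regularity: $F$ is only Lipschitz, $w$ is only $C^2$, and $\partial\Omega$ is $C^2$. A Lipschitz vector field has an $L^\infty$ covariant derivative defined almost everywhere (Rademacher's theorem), so the pointwise identity holds a.e.\ and all integrands are integrable on the bounded domain $\Omega$; the divergence theorem for Lipschitz vector fields on domains with $C^2$ (indeed Lipschitz) boundary is standard. One should also note the one-form inside the divergence is Lipschitz (being a product of Lipschitz and $C^1$ functions, as $\nabla w\in C^1$), which is exactly the regularity needed. No compactness of $\Omega$ beyond boundedness plus $C^2$ boundary is required. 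Everything else is routine tensor calculus, so the proof is short once the pointwise identity is in hand.
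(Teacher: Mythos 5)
Your strategy is sound and is essentially the paper's proof repackaged: the paper applies the divergence theorem three separate times (to $w^2F$, to $\lvert\nabla w\rvert^2F$, and in the integration by parts that produces the $DF(\nabla w,\nabla w)$ and Hessian terms), with the symmetry of $\nabla^2w$ effecting the same cancellation you invoke; you simply bundle these into one pointwise divergence identity and integrate once. However, the displayed identity you start from has the wrong signs on its last three terms and, taken literally, would yield the theorem with the three interior integrals reversed in sign. Carrying out the expansion you describe actually gives
\[
\diver\Bigl(\langle F,\nabla w\rangle\nabla w-\tfrac12\lvert\nabla w\rvert^2F+\tfrac{\lambda}{2}w^2F\Bigr)
=(\Delta w+\lambda w)\langle F,\nabla w\rangle+DF(\nabla w,\nabla w)-\tfrac12\lvert\nabla w\rvert^2\diver F+\tfrac{\lambda}{2}w^2\diver F,
\]
since $\diver(\langle F,\nabla w\rangle\nabla w)=\langle F,\nabla w\rangle\Delta w+DF(\nabla w,\nabla w)+\nabla^2w(\nabla w,F)$ while $\diver(\tfrac12\lvert\nabla w\rvert^2F)=\tfrac12\lvert\nabla w\rvert^2\diver F+\nabla^2w(\nabla w,F)$; it is this corrected version which, after integrating, applying the divergence theorem on the left, and moving the three interior terms across the equality, produces the stated identity. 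Please fix the signs in your headline formula so that it agrees with your own subsequent derivation. Your regularity remarks (Rademacher for $F$, the integrand being Lipschitz) are fine and go beyond what the paper records.
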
The classical Rellich identity was first stated by Rellich in \cite{R40}.  A special case  of Theorem \ref{nrellich},  called the generalized Pohozaev identity, was proved in  \cite{PS16,Xio17} in order to get some spectral inequalities between the Steklov and Laplace eigenvalues. 
\bigskip
  
 The paper is structured as follows.
 In Section\,\ref{prelim}, we recall tools 
 needed in later sections, namely the Hessian and Laplace comparison theorems. 
  Moreover, we give variational characterizations and alternative representations for the eigenvalues of  problems \eqref{neu}--\eqref{bihar-ii}.
Section\,\ref{rellich} contains the deduction of the Rellich identity on manifolds, as well as several applications thereof.
 Finally, we prove the main theorems  in Section\,\ref{KSinequalities}.

\begin{acknowledge} 
The authors are grateful to Werner Ballmann and Henrik Matthiesen for valuable suggestions.
Furthermore, the authors would like to thank
the Max Planck Institute for Mathematics in Bonn (MPIM) for supporting a research visit of the first named author. This work was mainly completed when the first named author was an EPDI postdoctoral fellow  at the Mittag-Leffler Institute. The first named author would like to thank the Mittag-Leffler Institute and 
the second named author would like to thank the MPIM for the support and for providing excellent working conditions.
\end{acknowledge}
\section{Preliminaries}

\label{prelim}
In this section we provide the basic tools needed in later sections.
Namely, we give the variational characterizations and alternative representations of the eigenvalues of problems~\eqref{neu}-\eqref{bihar-ii}  in the first subsection.  In the second subsection, we recall  the Hessian and Laplace comparison theorems.

\subsection{Variational characterization and alternative representations}
\label{vc}
Below, we list the variational characterization of eigenvalues of \eqref{neu}--\eqref{bihar-ii} and their alternative representations. 
For the special case $\Omega\subset\R^2$, the proofs are contained in \cite{KS68}.
The general proofs follow along the lines of these proofs and are therefore omitted.

\smallskip

\noindent \textit{Dirichlet eigenvalues:}
\bqn{dirvar}
\lambda_k&=&\inf_{\substack{ V\subset H^1_0(\Omega)\\ \dim V=k}}\sup_{0\neq u\in V}\frac{\int_\Omega |\nabla u|^2\,dv_g}{\int_{\Omega}u^2\,dv_g}\\
\nonumber&=&\inf_{\substack{ V\subset H^2(\Omega)\cap H^1_0(\Omega)\\\dim V=k}}\sup_{0\neq u\in V}\frac{\int_\Omega (\Delta u)^2\,dv_g}{\int_{\Omega}|\nabla u|^2\,dv_g}.
\eqn
\textit{Neumann eigenvalues:}
\bqn{neuvar}
\mu_k&=&\inf_{\substack{ V\subset H^1(\Omega)\\ \dim V=k}}\sup_{0\neq u\in V}\frac{\int_\Omega |\nabla u|^2\,dv_g}{\int_{\Omega}u^2\,dv_g}
\\
\nonumber&=&\inf_{\substack{ V\subset H^2(\Omega)\\  {\partial_\nu u}=0\;\text{on $\partial \Omega$}\\\dim V=k}}\sup_{0\neq u\in V}\frac{\int_\Omega (\Delta u)^2\,dv_g}{\int_{\Omega}|\nabla u|^2\,dv_g}.
\eqn
\textit{Steklov eigenvalues:}
\bqn{stekvar}
\sigma_k&=&\inf_{\substack{ V\subset H^1(\Omega)\\ \dim V=k}}\sup_{0\neq u\in V}\frac{\int_\Omega |\nabla u|^2\,dv_g}{\int_{\partial\Omega}u^2\,dv_g}
\\
\nonumber&=&\inf_{\substack{ V\subset \cH(\Omega)\\\dim V=k}}\sup_{0\neq u\in V}\frac{\int_{\partial\Omega}({\partial_\nu u})^2\,ds_g}{\int_{\Omega}|\nabla u|^2\,dv_g},
\eqn
where $\cH(\Omega)$ is the space of harmonic functions on $\Omega$.

\medskip

\noindent \textit{Biharmonic Steklov I eigenvalues:}
\ben{bihar1var}
\eta_k=\inf_{\substack{ V\subset H^2(\Omega)\cap H^1_0(\Omega)\\\dim V=k}}\sup_{\substack{0\neq u\in V}}\frac{\int_\Omega |\Delta u|^2\,dv_g}{\int_{\partial\Omega}({\partial_\nu u})^2\,ds_g}.
\een

\noindent \textit{Biharmonic Steklov II eigenvalues:}

\ben{bihar2var}
\xi_k=\inf_{\substack{ V\subset H_N^2(\Omega)\\\dim V=k}}\sup_{\substack{0\neq u\in V}}\frac{\int_\Omega |\Delta u|^2\,dv_g}{\int_{\partial\Omega}u^2\,ds_g},
\een
where $H_N^2(\Omega):=\{u\in H^2(\Omega)\,:\, {\partial_\nu u}=0\,\,\text{on $\partial \Omega$}\}$.

\subsection{Hessian and Laplace comparison theorems}
The idea of comparison theorems is to compare a given geometric quantity on a Riemannian manifold with the corresponding quantity on a model space. Below we recall the Hessian and Laplace comparison theorems.
For more details we refer the reader to \cite{BM03,C06,P06} and \cite{C06,P06}, respectively.

\smallskip

For any $\kappa\in \R$, denote by $H_\kappa:[0,\infty)\to\R$ the function satisfying the Riccati equation 
\[H_\kappa'+H_\kappa^2+\kappa=0,\quad\mbox{with}\quad \lim_{r\to0}\frac{rH_\kappa(r)}{n-1}=1.\]
Clearly, we have
$$H_\kappa(r)=\begin{cases}
(n-1)\sqrt{\kappa}\cot(\sqrt{\kappa}r)&\kappa>0,\\
\frac{n-1}{r}&\kappa=0,\\
(n-1)\sqrt{|\kappa|}\coth(\sqrt{|\kappa|}r)&\kappa<0.
\end{cases}
$$

With this preparation at hand we can now state the Hessian comparison theorem.

\begin{theorem}[Hessian comparison theorem]\label{hessian} Let $\gamma:[0,L]\to M$ be a minimizing geodesic starting from $p\in M$, such that its image is disjoint to the cut locus of $p$. 
Assume furthermore that $$\kappa_1\le K_g(X,\dot\gamma(t))\le \kappa_2$$ for all $t\in[0,L]$ and $X\in T_{\gamma(t)}M$ perpendicular to $\dot\gamma(t)$. Then
 \begin{itemize}\item[(a)]
 $d_p$ satisfies the inequalities
 \begin{align*}
& \nabla^2d_p(X,X)\le \frac{H_{\kappa_1}(t)}{n-1}g(X,X),\qquad \forall t\in[0,L],\quad X\in \langle\dot\gamma(t)\rangle ^\perp\subset T_{\gamma(t)}M,\\
&\nabla^2d_p(X,X)\ge \frac{H_{\kappa_2}(t)}{n-1}g(X,X),\qquad\forall t\in[0,L\wedge\frac{\pi}{2\sqrt{\kappa_2\vee0}}], \quad X\in \langle\dot\gamma(t)\rangle ^\perp\subset T_{\gamma(t)}M.
\end{align*}
Furthermore, we have
\[\nabla^2d_p(\dot\gamma(t),\dot\gamma(t))=0,\qquad \forall t\in[0,L].\]
Here $A\wedge B:=\min\{A,B\}$  and $A\vee B:=\max\{A,B\}$ for $A,B\in \R$.
\item[(b)]
$\rho_p$ satisfies the inequalities
\begin{align*}
&\nabla^2\rho_p(X,X)\le \frac{tH_{\kappa_1}(t)}{n-1}g(X,X),\qquad \forall t\in[0,L],\quad X\in \langle\dot\gamma(t)\rangle ^\perp\subset T_{\gamma(t)}M,\\
&\nabla^2\rho_p(X,X)\ge \frac{tH_{\kappa_2}(t)}{n-1}g(X,X),\qquad\forall t\in[0,L\wedge\frac{\pi}{2\sqrt{\kappa_2\vee0}}], \quad X\in \langle\dot\gamma(t)\rangle^\perp\subset T_{\gamma(t)}M,
\end{align*}
and 
\[\nabla^2\rho_p(\dot\gamma(t),\dot\gamma(t))=1,\qquad \forall t\in[0,L].\]
\end{itemize}
\end{theorem}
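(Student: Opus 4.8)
The statement is the classical Hessian comparison theorem together with its reformulation for $\rho_p=\tfrac12 d_p^2$, so the plan is to reduce everything to the Riccati comparison for the second fundamental form of geodesic spheres. First I would set up coordinates: since $\gamma([0,L])$ avoids the cut locus of $p$, the function $d_p$ is smooth on a neighbourhood of $\gamma((0,L])$, and $\nabla^2 d_p$ restricted to $\dot\gamma(t)^\perp$ equals the shape operator $S(t)$ of the geodesic sphere $\partial B(p,t)$ at $\gamma(t)$, while $\nabla d_p=\dot\gamma$ along $\gamma$ gives immediately $\nabla^2 d_p(\dot\gamma,\dot\gamma)=\tfrac12\partial_t|\nabla d_p|^2=0$. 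For the normal directions, I would recall that $S(t)$ satisfies the matrix Riccati equation $S'+S^2+R_{\dot\gamma}=0$ along $\gamma$, where $R_{\dot\gamma}(X)=R(X,\dot\gamma)\dot\gamma$, with the asymptotic $S(t)\sim \tfrac1t\,\mathrm{Id}$ as $t\to0$.

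\textbf{Key steps.} The heart of the argument is the scalar Riccati/Sturm comparison. Fix a parallel unit field $X(t)$ along $\gamma$ perpendicular to $\dot\gamma$ and set $\varphi(t)=\nabla^2 d_p(X(t),X(t))$. Using the Riccati equation and the curvature bound $\kappa_1\le K_g(X,\dot\gamma)\le\kappa_2$, one gets the differential inequalities
\[
\varphi'+\varphi^2+\kappa_2\le 0\le \varphi'+\varphi^2+\kappa_1,
\]
where I would be slightly careful: the term $S^2$ evaluated on $X$ is $|S X|^2\ge \varphi^2$ by Cauchy–Schwarz, which is exactly what is needed for the upper estimate $\varphi'+\varphi^2+\kappa_1\ge \varphi'+|SX|^2+\kappa_1 = -\langle R_{\dot\gamma}X,X\rangle+\kappa_1\ge 0$; for the lower estimate one instead needs that $S$ is, on the relevant interval, a nonnegative operator (true up to the first focal point, which for $\kappa_2>0$ occurs no earlier than $t=\pi/(2\sqrt{\kappa_2})$ for the radius-function normalization — this is the origin of the restriction $t\le L\wedge \pi/(2\sqrt{\kappa_2\vee 0})$), so that $|SX|^2\le \varphi\cdot\mathrm{tr}$-type bounds give $|SX|^2 \le \varphi\,\|S\|$; more cleanly, one takes the minimal/maximal eigenvalue of $S$ and compares it directly. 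Then I would invoke the standard ODE comparison lemma: if $\varphi'+\varphi^2+\kappa\le 0$ (resp.\ $\ge 0$) with $\varphi(t)\sim 1/t$ at $0$, and $h_\kappa(t):=H_\kappa(t)/(n-1)$ solves $h_\kappa'+h_\kappa^2+\kappa=0$ with the same asymptotics, then $\varphi\le h_\kappa$ (resp.\ $\varphi\ge h_\kappa$) on the interval where both are defined. Applying this with $\kappa=\kappa_1$ yields the upper bound $\nabla^2 d_p(X,X)\le \tfrac{H_{\kappa_1}(t)}{n-1}g(X,X)$ on all of $[0,L]$, and with $\kappa=\kappa_2$ the lower bound on $[0,L\wedge\pi/(2\sqrt{\kappa_2\vee0})]$. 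This proves part (a).

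\textbf{Part (b).} For $\rho_p=\tfrac12 d_p^2$ I would simply differentiate: $\nabla\rho_p=d_p\,\nabla d_p$, hence for vector fields $Y,Z$,
\[
\nabla^2\rho_p(Y,Z)= d_p\,\nabla^2 d_p(Y,Z)+\langle\nabla d_p,Y\rangle\langle\nabla d_p,Z\rangle.
\]
Along $\gamma$ at parameter $t$ we have $d_p=t$ and $\nabla d_p=\dot\gamma$. Taking $Y=Z=X\perp\dot\gamma(t)$ kills the second term and gives $\nabla^2\rho_p(X,X)=t\,\nabla^2 d_p(X,X)$, so the bounds in (b) follow from those in (a) by multiplying through by $t\ge 0$ (which preserves inequalities). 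Taking $Y=Z=\dot\gamma(t)$ gives $\nabla^2\rho_p(\dot\gamma,\dot\gamma)=t\cdot 0+1=1$. This completes the proof.

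\textbf{Main obstacle.} The only genuinely delicate point is the lower Hessian bound and the appearance of the cutoff $\pi/(2\sqrt{\kappa_2\vee0})$: one must argue that on that interval the shape operator $S(t)$ stays positive semidefinite (no focal point of the model sphere has occurred yet), which is what legitimizes the estimate $|SX|^2\le \lambda_{\max}(S)\,\varphi$ needed to close the Riccati inequality in the right direction, and one must check the comparison ODE lemma remains valid right down to the singular endpoint $t=0$ via the $1/t$ asymptotics. Everything else is bookkeeping with the divergence/second-derivative formulas and the explicit solutions $H_\kappa$.
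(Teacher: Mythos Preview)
The paper does not actually prove this theorem: it is recalled from the references \cite{BM03,C06,P06}, and the only argument given is the one-line observation that part~(b) follows from part~(a) via
\[
\nabla^2\rho_p=d_p\,\nabla^2 d_p+\nabla d_p\otimes\nabla d_p,
\]
which is exactly your derivation of~(b). So on that part you agree with the paper verbatim.

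Your sketch for part~(a) goes beyond what the paper does, and the overall strategy (Riccati comparison for the shape operator of geodesic spheres) is the standard one, but the execution contains a genuine sign error. From the matrix Riccati equation $S'+S^2+R_{\dot\gamma}=0$ and a parallel unit normal field $X$, one gets $\varphi'+|SX|^2+K_g(X,\dot\gamma)=0$ with $\varphi=\langle SX,X\rangle$. Since $|SX|^2\ge\varphi^2$ and $K_g\ge\kappa_1$, this gives
\[
\varphi'+\varphi^2+\kappa_1\;\le\;\varphi'+|SX|^2+K_g(X,\dot\gamma)\;=\;0,
\]
which is the \emph{opposite} of the inequality you displayed; likewise $-K_g(X,\dot\gamma)+\kappa_1\le 0$, not $\ge 0$. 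Fortunately your comparison lemma is stated correctly ($\varphi'+\varphi^2+\kappa\le 0$ with the $1/t$ asymptotics forces $\varphi\le h_\kappa$), so with the signs repaired the upper bound $\varphi\le H_{\kappa_1}/(n-1)$ does follow. For the lower bound the scalar reduction along a \emph{fixed} parallel $X$ does not close, precisely because $|SX|^2\ge\varphi^2$ points the wrong way; your aside that ``more cleanly, one takes the minimal/maximal eigenvalue of $S$'' is the actual fix: if $\varphi(t)=\lambda_{\min}(S(t))$, then at points of differentiability $|SX|^2=\varphi^2$ for the corresponding eigenvector, giving $\varphi'+\varphi^2+\kappa_2\ge 0$ and hence $\varphi\ge H_{\kappa_2}/(n-1)$ on the stated interval. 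The ``positivity of $S$'' heuristic you propose is not what makes the argument work.
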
   
Next, we state the Laplace comparison theorem. 

\begin{theorem}[Laplace comparison theorem]\label{laplacecomp}
The distance function $d_p$ and the squared distance function satisfy the following.
 \begin{itemize}\item[(a)]Let $\Ric_g\ge(n-1)\kappa$, $\kappa\in \R$. Then for every $p\in M$ the inequalities 
\[\Delta d_p(x)\le H_\kappa(d_p(x)),\quad\mbox{and}\quad
\Delta \rho_p(x)\le 1+d_p(x)H_\kappa(d_p(x))\]
hold at smooth points of $d_p$. Moreover the above  inequalities hold on the whole manifold in the sense of distribution.
\item[(b)] Under the same assumption and notations of  Theorem \ref{hessian}, the following inequalities hold. 
\begin{itemize}
\item[(i)] For every  $t\in[0,L]$\[\Delta d_p(\gamma(t))\le H_{\kappa_1}(t),\quad\mbox{and}\quad
\Delta \rho_p(\gamma(t))\le 1+tH_{\kappa_1}(t)\,;\]
\item [(ii)]For every $ t\in[0,L\wedge\frac{\pi}{2\sqrt{\kappa_2\vee0}}]$ \[\Delta d_p(\gamma(t))\ge H_{\kappa_2}(t),\quad\mbox{and}\quad\Delta \rho_p(\gamma(t))\ge 1+tH_{\kappa_2}(t).\]
\end{itemize}
\end{itemize}
\end{theorem}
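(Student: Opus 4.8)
The plan is to deduce part (b) directly from the Hessian comparison theorem (Theorem~\ref{hessian}) by tracing, and to obtain part (a), which uses only the lower Ricci bound, from the Bochner formula together with a one-dimensional Riccati comparison. For part (b), fix $t$ and choose an orthonormal basis $e_1,\dots,e_{n-1},\dot\gamma(t)$ of $T_{\gamma(t)}M$ with $e_i\perp\dot\gamma(t)$. Summing the Hessian estimates of Theorem~\ref{hessian}(a) over this basis and using $\nabla^2 d_p(\dot\gamma(t),\dot\gamma(t))=0$ gives, for $t\in[0,L]$,
\[
\Delta d_p(\gamma(t))=\sum_{i=1}^{n-1}\nabla^2 d_p(e_i,e_i)\le (n-1)\,\frac{H_{\kappa_1}(t)}{n-1}=H_{\kappa_1}(t),
\]
and in the same way the reversed inequality with $\kappa_2$ on $[0,L\wedge\frac{\pi}{2\sqrt{\kappa_2\vee 0}}]$. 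Tracing Theorem~\ref{hessian}(b) and using $\nabla^2\rho_p(\dot\gamma(t),\dot\gamma(t))=1$ yields $1+tH_{\kappa_2}(t)\le\Delta\rho_p(\gamma(t))\le 1+tH_{\kappa_1}(t)$ on the respective intervals. This is exactly part (b).

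For part (a) at a smooth point $x$ of $d_p$, i.e.\ $x\notin\{p\}\cup{\rm Cut}(p)$, let $\gamma:[0,\ell]\to M$, $\ell=d_p(x)$, be the unique unit-speed minimizing geodesic from $p$ to $x$, so $r:=d_p$ is smooth near $\gamma((0,\ell])$ with $|\nabla r|\equiv 1$. Put $\phi(t):=\Delta r(\gamma(t))$. Evaluating the Bochner formula $\tfrac12\Delta|\nabla r|^2=|\nabla^2 r|^2+\langle\nabla r,\nabla\Delta r\rangle+\Ric(\nabla r,\nabla r)$ along $\gamma$, where the left-hand side vanishes and $\langle\nabla r,\nabla\Delta r\rangle=\phi'$, gives $\phi'=-|\nabla^2 r|^2-\Ric(\dot\gamma,\dot\gamma)$. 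Since $|\nabla r|\equiv 1$ forces $\nabla^2 r(\nabla r,\cdot)=0$, the Hessian $\nabla^2 r$ lives on the $(n-1)$-dimensional space $\dot\gamma(t)^\perp$, so Cauchy--Schwarz gives $|\nabla^2 r|^2\ge(\tr\nabla^2 r)^2/(n-1)=\phi^2/(n-1)$; together with $\Ric_g\ge (n-1)\kappa$ this produces the Riccati inequality
\[
\phi'(t)\le -\frac{\phi(t)^2}{n-1}-(n-1)\kappa,\qquad t\in(0,\ell].
\]
The model function satisfies the corresponding equality $H_\kappa'=-\frac{1}{n-1}H_\kappa^2-(n-1)\kappa$ with leading behaviour $H_\kappa(t)=\tfrac{n-1}{t}+O(t)$ near $t=0$. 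Writing $\psi:=\phi-H_\kappa$ (so $\psi\to 0$ as $t\to0^+$, since $\phi$ too has the expansion $\tfrac{n-1}{t}+O(t)$ near $p$), the Riccati relations give $\psi'\le-\frac{\phi+H_\kappa}{n-1}\psi$, hence $e^{G}\psi$ is nonincreasing on $(0,\ell]$ for any primitive $G$ of $(\phi+H_\kappa)/(n-1)$; as $t\to0^+$ we have $G(t)\to-\infty$ while $\psi(t)\to0$, so $e^{G}\psi\to0$ at the left endpoint and is therefore $\le0$ on all of $(0,\ell]$, i.e.\ $\phi\le H_\kappa$. Taking $t=\ell$ gives $\Delta d_p(x)\le H_\kappa(d_p(x))$; and from $\nabla\rho_p=d_p\nabla d_p$ we obtain $\Delta\rho_p=|\nabla d_p|^2+d_p\Delta d_p=1+d_p\Delta d_p\le 1+d_pH_\kappa(d_p)$.

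Finally, the pointwise inequalities on $M\setminus({\rm Cut}(p)\cup\{p\})$ must be promoted to distributional inequalities on all of $M$, and handling the cut locus is the main technical point. I would invoke Calabi's upper-barrier trick: for $x\in{\rm Cut}(p)$ and a minimizing geodesic $\gamma$ from $p$ to $x$, the function $f_\varepsilon(y):=\varepsilon+d(\gamma(\varepsilon),y)$ is smooth near $x$ for small $\varepsilon>0$ (because $x$ precedes the cut point of $\gamma(\varepsilon)$), satisfies $f_\varepsilon\ge d_p$ with equality at $x$, and, by the smooth case applied from base point $\gamma(\varepsilon)$, obeys $\Delta f_\varepsilon(x)\le H_\kappa(d_p(x)-\varepsilon)\to H_\kappa(d_p(x))$. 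Thus $d_p$ admits arbitrarily accurate smooth upper barriers with suitably bounded Laplacian, and the standard lemma that a function possessing such barriers is a distributional subsolution of $\Delta u\le H_\kappa(u)$ finishes the argument; the barriers $\tfrac12 f_\varepsilon^2$ deal with $\rho_p$. Equivalently, $d_p$ is locally semiconcave, so its distributional Laplacian is a measure whose singular part — carried by ${\rm Cut}(p)$ — is nonpositive. For the Calabi trick, the barrier-to-distribution lemma, and the semiconcavity of $d_p$ I would simply cite \cite{C06,P06}. The delicate points are precisely this passage across the cut locus and the ODE comparison at the singular endpoint $t=0$; everything else is routine.
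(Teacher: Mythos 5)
Your proof is correct, but it takes a different (and much more self-contained) route than the paper, which does not actually prove Theorem~\ref{laplacecomp}: there it is recalled as a standard comparison theorem with a pointer to \cite{C06,P06}, and the only argument given is the closing remark that the statements involving $\rho_p$ (and part (b)) follow from those for $d_p$ via $\Delta\rho_p=|\nabla d_p|^2+d_p\Delta d_p$ together with the Hessian comparison of Theorem~\ref{hessian}. Your part (b) is exactly that remark made explicit by tracing, and it is fine. For part (a) you supply the textbook proof the paper delegates to its references: Bochner's formula along a minimizing geodesic plus the Cauchy--Schwarz bound $|\nabla^2 d_p|^2\ge(\Delta d_p)^2/(n-1)$ gives the Riccati inequality, the ODE comparison with the correct asymptotics at $t=0$ gives the estimate at smooth points, and Calabi's upper-barrier trick extends it across the cut locus in the distributional sense. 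Two remarks. First, your model equation $H_\kappa'=-H_\kappa^2/(n-1)-(n-1)\kappa$ is the right one for the explicit functions listed in the paper (e.g.\ $H_0(r)=(n-1)/r$); the Riccati equation as printed there, $H_\kappa'+H_\kappa^2+\kappa=0$, matches those formulas only for $n=2$, so you have in effect corrected a normalization slip, and your choice is the one consistent with the rest of the argument. Second, the ingredients you leave to citation --- that $x\notin{\rm Cut}(\gamma(\varepsilon))$ so the barrier $f_\varepsilon$ is smooth near $x$, the barrier-to-distribution (or semiconcavity) lemma, the harmless singularity of $d_p$ at the point $p$ itself, and the borderline case $d_p(x)=\pi/\sqrt{\kappa}$ when $\kappa>0$ --- are all standard and contained in \cite{C06,P06}, the same sources the paper relies on for the entire statement, so this level of delegation is acceptable. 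What your approach buys is a proof readable without outside references; what the paper's buys is brevity appropriate to a preliminaries section.
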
   
Notice that part $(b)$ in the above theorems is an immediate consequence of part $(a)$, since 
the distance function $d_p$ and one half of the square of the distance function $\rho_p$ satisfy
\[\nabla^2\rho_p=d_p\nabla^2d_p+\nabla d_p\otimes\nabla d_p,\qquad\Delta \rho_p=|\nabla d_p|^2+d_p\Delta d_p.\]

\section{Generalized Rellich identity}
\label{rellich}
An important identity which is used in the study of eigenvalue problems is the Rellich identity. To our knowledge it was first stated and used by Rellich \cite{R40} in the study of the eigenvalue problem. 
Some versions of the Rellich identity are also referred to as Pohozaev identity; see \cite{PS16,Xio17}. 
In this section, we provide the generalized Rellich identity on Riemannian manifolds, i.e. Theorem\,\ref{nr}, and its higher order version. Applications of this result can be found in the last subsection and in  Section\,\ref{KSinequalities}.

\subsection{Rellich identity on manifolds}
The next theorem states the Rellich identity on Riemannian manifolds. 
\begin{theorem}[Generalized Rellich identity for manifolds]\label{nrellich}
Let $(\Omega,g)$ be a Riemannian manifold with piecewise smooth boundary. Let $F:\Omega\to T\Omega$ be a Lipschitz vector field on $\Omega$. Then for every  $w\in C^2(\Omega)$ we have
\begin{multline*}
\int_{\Omega}(\Delta w+\lambda w) \langle F,\nabla w\rangle dv_g
=\int_{\partial\Omega}{\partial_\nu w}\langle F,\nabla w\rangle ds_g-\frac{1}{2}\int_{\partial\Omega}\vert\nabla w\vert^2 \langle F,\nu\rangle ds_g+\frac{\lambda}{2}\int_{\partial \Omega}w^2 \langle F,\nu \rangle ds_g\\
+\frac{1}{2}\int_{\Omega}\diver F\lvert \nabla w\lvert^2dv_g-\int_{\Omega}DF(\nabla w,\nabla w)dv_g
-\frac{\lambda}{2}\int_{\Omega}w^2\diver F\, dv_g,
\end{multline*}
where $\nu$ denotes the outward pointing normal and $\langle \,\cdot\,,\,\cdot\, \rangle=g( \,\cdot\,,\,\cdot\,)$.
\end{theorem}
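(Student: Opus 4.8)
The plan is to derive the identity by a direct computation: expand $\langle F, \nabla w\rangle$ against $\Delta w$ using integration by parts (the divergence theorem), and handle the $\lambda$-term separately. More precisely, I would start from the identity $(\Delta w)\langle F,\nabla w\rangle = \diver\!\big((\langle F,\nabla w\rangle)\nabla w\big) - \langle \nabla\langle F,\nabla w\rangle,\nabla w\rangle$, which is just the product rule $\diver(fX)=f\diver X + \langle\nabla f, X\rangle$ applied with $f=\langle F,\nabla w\rangle$ and $X=\nabla w$ (note $\diver\nabla w=\Delta w$). Integrating over $\Omega$ and applying the divergence theorem to the first term produces the boundary integral $\int_{\partial\Omega}\langle F,\nabla w\rangle\,\partial_\nu w\, ds_g$, which is the first term on the right-hand side.

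The core of the argument is then to massage the bulk term $-\int_\Omega \langle\nabla\langle F,\nabla w\rangle,\nabla w\rangle\, dv_g$. Writing $\nabla\langle F,\nabla w\rangle = D F(\cdot,\nabla w) + \nabla^2 w(F,\cdot)$ (the derivative of a pairing of two vector fields, using that $D$ denotes the Levi-Civita connection and $\nabla^2 w$ the Hessian), pairing with $\nabla w$ gives
\[
\langle\nabla\langle F,\nabla w\rangle,\nabla w\rangle = DF(\nabla w,\nabla w) + \nabla^2 w(F,\nabla w).
\]
The key observation is that $\nabla^2 w(F,\nabla w) = \tfrac12\langle F,\nabla|\nabla w|^2\rangle$, since $\nabla|\nabla w|^2 = 2\nabla^2 w(\nabla w,\cdot)$ and the Hessian is symmetric. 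Thus $-\int_\Omega \nabla^2 w(F,\nabla w)\, dv_g = -\tfrac12\int_\Omega\langle F,\nabla|\nabla w|^2\rangle\, dv_g$, and one more integration by parts (divergence theorem) on this turns it into $-\tfrac12\int_{\partial\Omega}|\nabla w|^2\langle F,\nu\rangle\, ds_g + \tfrac12\int_\Omega |\nabla w|^2\diver F\, dv_g$. Combining these accounts for all the $\lambda$-free terms on the right-hand side.

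Finally, for the $\lambda$-term on the left, I would compute $\int_\Omega \lambda w\langle F,\nabla w\rangle\, dv_g = \tfrac{\lambda}{2}\int_\Omega \langle F, \nabla(w^2)\rangle\, dv_g$, and integrate by parts once more to get $\tfrac{\lambda}{2}\int_{\partial\Omega} w^2\langle F,\nu\rangle\, ds_g - \tfrac{\lambda}{2}\int_\Omega w^2\diver F\, dv_g$, which are exactly the remaining two terms. Adding everything yields the claimed identity.

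The main obstacle, and the point requiring care rather than cleverness, is the regularity: $F$ is only Lipschitz, so $\diver F$ and $DF$ exist merely almost everywhere (as $L^\infty$ objects by Rademacher's theorem), and one must check the divergence theorem still applies. The standard route is to approximate $F$ by smooth vector fields $F_\varepsilon$ (e.g.\ via mollification in local charts and a partition of unity), prove the identity for each $F_\varepsilon$ where every term is classical, and then pass to the limit: the boundary integrals and the $\nabla^2 w$, $w^2$ bulk integrals converge because $F_\varepsilon\to F$ uniformly, while $\diver F_\varepsilon \to \diver F$ and $DF_\varepsilon\to DF$ in, say, $L^1_{loc}$, which suffices since $w\in C^2(\Omega)$ makes $|\nabla w|^2$ and $w^2$ bounded on $\Omega$ (assuming, as is implicit, enough integrability up to the boundary; with piecewise smooth $\partial\Omega$ and $w\in C^2(\overline\Omega)$ this is automatic). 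I would remark that the piecewise smoothness of $\partial\Omega$ causes no trouble because the divergence theorem holds across the finitely many lower-dimensional edges.
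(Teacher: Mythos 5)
Your proposal is correct and follows essentially the same route as the paper's proof: integrate $\Delta w\,\langle F,\nabla w\rangle$ by parts, split the resulting bulk term into $DF(\nabla w,\nabla w)$ plus the Hessian term $\nabla^2w(F,\nabla w)=\tfrac12\langle F,\nabla|\nabla w|^2\rangle$, integrate by parts again, and treat the $\lambda$-term via $\lambda w\langle F,\nabla w\rangle=\tfrac{\lambda}{2}\langle F,\nabla(w^2)\rangle$. Your additional remarks on the Lipschitz regularity of $F$ and the mollification argument are a sensible supplement that the paper leaves implicit.
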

In \cite{PS16,Xio17}, the authors proved the above identity when $w$ is harmonic and $\lambda=0$. The proof of the general version follows the same line of argument.  For the sake of completeness we give the whole argument. 
\begin{proof}[Proof of Theorem \ref{nrellich}]
 We calculate $\int_{\Omega}\Delta w\langle F,\nabla w\rangle dv_g$ and $\int_{\Omega}\lambda w\langle F,\nabla w\rangle dv_g$ separately. 
In order to calculate the latter, we apply the divergence theorem to obtain
\begin{align*}
\int_{\partial\Omega} w^2\la F,\nu\ra\,ds_g=\int_\Omega\diver (w^2F)dv_g=\int_\Omega\left(2 w \langle F,\nabla w\rangle+w^2 \diver F\right)dv_g.
\end{align*}
Thus, we get
\begin{align*}
\int_{\Omega}\lambda w\langle F,\nabla w\rangle dv_g=\frac{\lambda}{2}\left(\int_{\partial \Omega}w^2 \langle F,\nu \rangle ds_g-\int_{\Omega} w^2\diver F\,dv_g\right).
\end{align*}
For the other term, using integration by parts, we obtain 
\begin{multline}
\label{lhs}
\int_{\Omega}\Delta w\langle F,\nabla w\rangle dv_g=
\int_{\partial\Omega}\langle F,\nabla w\rangle{\partial_\nu w} ds_g-\int_{\Omega}\langle \nabla\langle F,\nabla w\rangle,\nabla w\rangle dv_g\\=
\int_{\partial\Omega}\langle F,\nabla w\rangle{\partial_\nu w} ds_g-\int_{\Omega}\langle \nabla_{\nabla w}F,\nabla w\rangle dv_g-\int_{\Omega}\langle \nabla_{\nabla w}\nabla w,F\rangle dv_g\\
=\int_{\partial\Omega}\langle F,\nabla w\rangle{\partial_\nu w} ds_g-\int_{\Omega}DF(\nabla w,\nabla w) dv_g-\int_{\Omega}\nabla^2w(\nabla w,F) dv_g.
\end{multline}
For further simplification, we observe that
\begin{multline*}
2\int_\Omega\nabla^2w(\nabla w,F)dv_g=\int_\Omega\diver(F\lvert\nabla w\lvert^2)dv_g-\int_\Omega\diver F\lvert\nabla w\lvert^2dv_g\\=\int_{\partial\Omega}\lvert\nabla w\lvert^2Fds_g-\int_\Omega\diver F\lvert\nabla w\lvert^2dv_g.
\end{multline*}
Plugging this identity into (\ref{lhs})  we get
\begin{align*}
\int_{\Omega}\Delta w\langle F,\nabla w\rangle dv_g=&\int_{\partial\Omega}{\partial_\nu w}\langle F,\nabla w\rangle ds_g-\frac{1}{2}\int_{\partial\Omega}\vert\nabla w\vert^2 \langle F,\nu\rangle ds_g\\&+\frac{1}{2}\int_{\Omega}\diver F\lvert \nabla w\lvert^2dv_g-\int_{\Omega}DF(\nabla w,\nabla w)dv_g.\end{align*}
This completes the proof.
\end{proof}
\subsection{Higher order Rellich identities}
In this section we provide a higher order Rellich identity.

\smallskip

The following preparatory lemma is a simple consequence from Theorem\,\ref{nrellich}.
For the special case $M=\R^n$,
the identity stated in the lemma was first proven by Mitidieri in \cite{M93}.

\begin{lemma}
\label{polar}
For $u,v\in C^2(\Omega)$ we have 
\begin{multline*}
\int_{\Omega}\Delta w \langle F,\nabla v\rangle +\Delta v \langle F,\nabla w\rangle dv_g
=\int_{\partial\Omega}\{{\partial_\nu w}\langle F,\nabla v\rangle +{\partial_\nu v}\langle F,\nabla w\rangle \}ds_g
-\int_{\partial\Omega} \langle\nabla w,\nabla v \rangle \langle F,\nu\rangle ds_g\\
+\int_{\Omega}\diver F\langle\nabla w,\nabla v \rangle dv_g
-2\int_{\Omega}DF(\nabla w,\nabla v)dv_g.
\end{multline*}
\end{lemma}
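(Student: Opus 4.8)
The plan is to obtain this polarized identity directly from Theorem~\ref{nrellich} by a standard bilinearization trick, applied with $\lambda=0$. First I would record the statement of Theorem~\ref{nrellich} in the special case $\lambda=0$, which reads
\[
\int_{\Omega}\Delta w\,\langle F,\nabla w\rangle\,dv_g
=\int_{\partial\Omega}{\partial_\nu w}\langle F,\nabla w\rangle\,ds_g
-\tfrac12\int_{\partial\Omega}|\nabla w|^2\langle F,\nu\rangle\,ds_g
+\tfrac12\int_{\Omega}\diver F\,|\nabla w|^2\,dv_g
-\int_{\Omega}DF(\nabla w,\nabla w)\,dv_g,
\]
valid for every $w\in C^2(\Omega)$. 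Applying this to $w$, to $v$, and to $w+v$ (each in $C^2(\Omega)$) and using the bilinearity of the two interior integrands $DF(\cdot,\cdot)$ and $\langle\nabla\cdot,\nabla\cdot\rangle$ together with the obvious identities $|\nabla(w+v)|^2=|\nabla w|^2+2\langle\nabla w,\nabla v\rangle+|\nabla v|^2$ and $\langle F,\nabla(w+v)\rangle=\langle F,\nabla w\rangle+\langle F,\nabla v\rangle$, the pure $w$-terms and pure $v$-terms cancel and what remains is exactly the asserted identity after dividing by the factor of $2$ that appears in the cross terms.

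In more detail, write the $\lambda=0$ identity as $\Phi(w)=0$, where $\Phi$ is the difference of the two sides. Since every term of $\Phi$ is a quadratic form in $w$ (the boundary and interior integrands are each quadratic in $\nabla w$, or in $\partial_\nu w$ and $\nabla w$ jointly), the polarization $\Phi(w+v)-\Phi(w)-\Phi(v)$ equals twice the associated symmetric bilinear form, and it vanishes because each of $\Phi(w)$, $\Phi(v)$, $\Phi(w+v)$ vanishes. Expanding $\Phi(w+v)-\Phi(w)-\Phi(v)=0$ term by term gives
\[
\int_{\partial\Omega}\bigl({\partial_\nu w}\langle F,\nabla v\rangle+{\partial_\nu v}\langle F,\nabla w\rangle\bigr)\,ds_g
-\int_{\partial\Omega}\langle\nabla w,\nabla v\rangle\langle F,\nu\rangle\,ds_g
+\int_{\Omega}\diver F\,\langle\nabla w,\nabla v\rangle\,dv_g
-2\int_{\Omega}DF(\nabla w,\nabla v)\,dv_g
=\int_{\Omega}\bigl(\Delta w\,\langle F,\nabla v\rangle+\Delta v\,\langle F,\nabla w\rangle\bigr)\,dv_g,
\]
which is the claim. (One minor point: the factor $\tfrac12$ on the two terms $\tfrac12\int_{\partial\Omega}|\nabla w|^2\langle F,\nu\rangle$ and $\tfrac12\int_\Omega \diver F\,|\nabla w|^2$ becomes a factor $1$ on the corresponding cross terms after polarization, while the term $-\int_\Omega DF(\nabla w,\nabla w)$ with coefficient $1$ produces the coefficient $2$ on $\int_\Omega DF(\nabla w,\nabla v)$ — this matches the statement.)

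There is essentially no obstacle here; the only things to check are the hypotheses. Theorem~\ref{nrellich} requires $w\in C^2(\Omega)$ and $F$ a Lipschitz vector field; since $u,v\in C^2(\Omega)$ by assumption (the lemma is stated for ``$u,v\in C^2(\Omega)$'', which I read as $w,v\in C^2(\Omega)$ — note the apparent typo ``$u$'' for ``$w$'' in the statement), the sum $w+v$ also lies in $C^2(\Omega)$ and the hypotheses are met for all three applications, with the same $F$. Thus the proof is a one-line invocation of Theorem~\ref{nrellich} three times followed by the polarization bookkeeping above, and I would present it in exactly that compressed form.
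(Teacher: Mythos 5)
Your proposal is correct and matches the paper's proof, which likewise obtains the identity by substituting $w+v$ into Theorem~\ref{nrellich} with $\lambda=0$ and polarizing. Your version just spells out the bookkeeping (and rightly flags the $u$/$w$ typo in the statement) that the paper leaves implicit.
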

\begin{proof}
Replacing $w$ by $w+v$ in Theorem\,\ref{nrellich} and set $\lambda=0$ we get the identity.
\end{proof}

The following theorem states the higher order Rellich identity.

\begin{theorem}
\label{Rellich-2nd}
For $w\in C^4(\Omega)$ we have
\begin{multline*}
\int_{\Omega}(\Delta^2w+\lambda \Delta  w) \langle F,\nabla w\rangle dv_g
=\frac{1}{2}\int_{\Omega}\diver  F(\Delta w)^2 dv_g-\frac{1}{2}\int_{\partial\Omega}(\Delta w)^2 \langle F,\nu\rangle dv_g\\
+\int_{\partial\Omega}\{{\partial_\nu w}\langle F,\nabla \Delta  w\rangle +{\partial_\nu \Delta  w}\langle F,\nabla w\rangle \}ds_g-\int_{\partial\Omega} \langle\nabla w,\nabla \Delta  w \rangle \langle F,\nu\rangle ds_g\\
+\int_{\Omega}\diver  F\langle\nabla w,\nabla \Delta  w \rangle dv_g
-2\int_{\Omega}DF(\nabla w,\nabla \Delta  w)dv_g+
\lambda \int_{\partial\Omega}{\partial_\nu w}\langle F,\nabla w\rangle ds_g\\-\frac{\lambda}{2}\int_{\partial\Omega}\vert\nabla w\vert^2 \langle F,\nu\rangle ds_g+\frac{\lambda}{2}\int_{\Omega}\diver F\lvert \nabla w\lvert^2dv_g-\lambda\int_{\Omega}DF(\nabla w,\nabla w)dv_g.
\end{multline*}
\end{theorem}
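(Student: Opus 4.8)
The plan is to derive the higher order Rellich identity from Theorem~\ref{nrellich} and Lemma~\ref{polar} by applying them to suitable combinations of $w$ and $\Delta w$, rather than by repeating the integration-by-parts bookkeeping from scratch. The left-hand side $\int_\Omega (\Delta^2 w+\lambda\Delta w)\langle F,\nabla w\rangle\,dv_g$ naturally splits as $\int_\Omega \Delta(\Delta w)\langle F,\nabla w\rangle\,dv_g+\lambda\int_\Omega \Delta w\langle F,\nabla w\rangle\,dv_g$, and the two pieces should be handled separately.

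For the second piece, I would apply Theorem~\ref{nrellich} \emph{with $\lambda=0$} to the function $w$; this immediately yields
\[
\lambda\!\int_\Omega \Delta w\langle F,\nabla w\rangle\,dv_g
=\lambda\!\int_{\partial\Omega}\!{\partial_\nu w}\langle F,\nabla w\rangle\,ds_g-\frac{\lambda}{2}\!\int_{\partial\Omega}\!|\nabla w|^2\langle F,\nu\rangle\,ds_g+\frac{\lambda}{2}\!\int_\Omega \!\diver F\,|\nabla w|^2\,dv_g-\lambda\!\int_\Omega\! DF(\nabla w,\nabla w)\,dv_g,
\]
which is exactly the block of $\lambda$-terms appearing in the last line and a half of the claimed identity. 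For the first piece, the idea is to treat $\Delta w$ as a new function: set $v=\Delta w$, so that $\Delta(\Delta w)\langle F,\nabla w\rangle = \Delta v\,\langle F,\nabla w\rangle$. Applying Lemma~\ref{polar} to the pair $(w,v)=(w,\Delta w)$ gives
\[
\int_\Omega \big(\Delta w\,\langle F,\nabla \Delta w\rangle+\Delta^2 w\,\langle F,\nabla w\rangle\big)dv_g = \text{(boundary terms)} + \int_\Omega \diver F\,\langle\nabla w,\nabla\Delta w\rangle\,dv_g - 2\int_\Omega DF(\nabla w,\nabla\Delta w)\,dv_g,
\]
so $\int_\Omega \Delta^2 w\,\langle F,\nabla w\rangle\,dv_g$ equals the right-hand side of Lemma~\ref{polar} (with $v=\Delta w$) \emph{minus} $\int_\Omega \Delta w\,\langle F,\nabla\Delta w\rangle\,dv_g$. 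The remaining term $\int_\Omega \Delta w\,\langle F,\nabla\Delta w\rangle\,dv_g$ is then handled by Theorem~\ref{nrellich} applied to the function $w'=\Delta w$ with $\lambda=0$: it produces $\int_{\partial\Omega}{\partial_\nu \Delta w}\langle F,\nabla\Delta w\rangle\,ds_g-\tfrac12\int_{\partial\Omega}|\nabla\Delta w|^2\langle F,\nu\rangle\,ds_g+\tfrac12\int_\Omega\diver F\,|\nabla\Delta w|^2\,dv_g-\int_\Omega DF(\nabla\Delta w,\nabla\Delta w)\,dv_g$.

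The final step is to reconcile this with the stated form of the theorem, whose first line features $\tfrac12\int_\Omega \diver F(\Delta w)^2\,dv_g-\tfrac12\int_{\partial\Omega}(\Delta w)^2\langle F,\nu\rangle\,ds_g$ rather than terms involving $|\nabla\Delta w|^2$ or $\partial_\nu\Delta w$. This means one more application of an integration-by-parts / divergence identity is needed: the combination $\int_{\partial\Omega}{\partial_\nu\Delta w}\langle F,\nabla\Delta w\rangle\,ds_g-\tfrac12\int_{\partial\Omega}|\nabla\Delta w|^2\langle F,\nu\rangle\,ds_g+\tfrac12\int_\Omega\diver F\,|\nabla\Delta w|^2\,dv_g-\int_\Omega DF(\nabla\Delta w,\nabla\Delta w)\,dv_g$ should be rewritten, using $\int_\Omega \diver(F(\Delta w)^2)\,dv_g = \int_{\partial\Omega}(\Delta w)^2\langle F,\nu\rangle\,ds_g$ and the computation $2\int_\Omega \nabla^2(\Delta w)(\nabla\Delta w,F)\,dv_g = \int_{\partial\Omega}|\nabla\Delta w|^2\langle F,\nu\rangle\,ds_g - \int_\Omega \diver F\,|\nabla\Delta w|^2\,dv_g$, so that these $\Delta w$-gradient terms collapse. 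A cleaner alternative is to apply Theorem~\ref{nrellich} directly to $w'=\Delta w$ \emph{with the eigenvalue parameter replaced by $0$ but keeping track of the identity $\int_\Omega \Delta(\Delta w)\langle F,\nabla\Delta w\rangle\,dv_g$ separately}, which is essentially the source of the $(\Delta w)^2$ terms. I would organize the write-up around the three invocations (Lemma~\ref{polar} on $(w,\Delta w)$; Theorem~\ref{nrellich} on $w$; Theorem~\ref{nrellich} on $\Delta w$) plus one explicit divergence-theorem manipulation, then collect terms.

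The main obstacle I anticipate is purely organizational: making sure every boundary and interior term lands with the correct sign and coefficient when the three identities are superimposed, and in particular verifying that the $\int_\Omega \Delta w\langle F,\nabla\Delta w\rangle\,dv_g$ term (which appears with coefficient $-1$ after Lemma~\ref{polar} and implicitly inside the application of Theorem~\ref{nrellich} to $\Delta w$) is not double-counted. There is no conceptual difficulty — it is the same integration-by-parts machinery as in the proof of Theorem~\ref{nrellich} — but the cancellation pattern is intricate enough that I would check it by substituting $M=\R^n$, $F=$ a constant vector field or $F=x$, and comparing against Mitidieri's formula in~\cite{M93}.
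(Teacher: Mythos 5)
Your skeleton is the same as the paper's: split off the $\lambda\int_\Omega\Delta w\langle F,\nabla w\rangle\,dv_g$ piece and evaluate it with Theorem~\ref{nrellich}, and handle $\int_\Omega\Delta^2w\,\langle F,\nabla w\rangle\,dv_g$ by polarization, i.e.\ Lemma~\ref{polar} with $v=\Delta w$. But the step where you dispose of the leftover cross term $\int_\Omega\Delta w\,\langle F,\nabla\Delta w\rangle\,dv_g$ is wrong. Applying Theorem~\ref{nrellich} to $w'=\Delta w$ with $\lambda=0$ yields an identity for $\int_\Omega\Delta w'\,\langle F,\nabla w'\rangle\,dv_g=\int_\Omega\Delta^2w\,\langle F,\nabla\Delta w\rangle\,dv_g$, in which $\Delta w$ appears \emph{differentiated} in the first slot; it says nothing about $\int_\Omega\Delta w\,\langle F,\nabla\Delta w\rangle\,dv_g$. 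Consequently the four terms you list ($\int_{\partial\Omega}\partial_\nu\Delta w\,\langle F,\nabla\Delta w\rangle$, $-\tfrac12\int_{\partial\Omega}|\nabla\Delta w|^2\langle F,\nu\rangle$, etc.) sum to a genuinely different quantity, and no amount of integration by parts will ``collapse'' them into the $(\Delta w)^2$ terms of the statement — your proposed reconciliation cannot go through.

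The correct step is more elementary than another invocation of the Rellich identity: use the pointwise identity $\Delta w\,\langle F,\nabla\Delta w\rangle=\tfrac12\langle F,\nabla\bigl((\Delta w)^2\bigr)\rangle$ and the divergence theorem to get
\[
\int_\Omega\Delta w\,\langle F,\nabla\Delta w\rangle\,dv_g
=\frac12\int_{\partial\Omega}(\Delta w)^2\langle F,\nu\rangle\,ds_g-\frac12\int_\Omega\diver F\,(\Delta w)^2\,dv_g,
\]
which, subtracted from the output of Lemma~\ref{polar}, produces exactly the first line of the claimed identity. (Your own formula $\int_\Omega\diver\bigl(F(\Delta w)^2\bigr)dv_g=\int_{\partial\Omega}(\Delta w)^2\langle F,\nu\rangle\,ds_g$, once you expand the divergence of the product, \emph{is} this step — but it must be applied directly to the cross term, not used to massage the $|\nabla\Delta w|^2$ terms, which should never appear.) With that substitution the rest of your outline, including the $\lambda$-block, matches the paper's proof.
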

\begin{proof}
If we choose $v=\Delta  w$ in Lemma\,\ref{polar}, we obtain 
\begin{eqnarray*}
\int_{\Omega}\Delta^2w \langle F,\nabla w\rangle dv_g
&=&-\int_{\Omega}\Delta w \langle F,\nabla \Delta  w\rangle dv_g\\&&+\int_{\partial\Omega}\{{\partial_\nu w}\langle F,\nabla \Delta  w\rangle +{\partial_\nu \Delta  w}\langle F,\nabla w\rangle \}ds_g
-\int_{\partial\Omega} \langle\nabla w,\nabla \Delta  w \rangle \langle F,\nu\rangle ds_g\\
&&+\int_{\Omega}\diver  F\langle\nabla w,\nabla \Delta  w \rangle dv_g
-2\int_{\Omega}DF(\nabla w,\nabla \Delta  w)dv_g.
\end{eqnarray*}
By the divergence theorem we have
\begin{align*}
\int_{\Omega}\Delta w \langle F,\nabla \Delta  w\rangle dv_g&=\frac{1}{2}\int_{\Omega}\la F,\nabla(\Delta w)^2 \ra dv_g
\\&=-\frac{1}{2}\int_{\Omega}\diver  F(\Delta w)^2 dv_g+\frac{1}{2}\int_{\partial\Omega}(\Delta w)^2 \langle F,\nu\rangle dv_g,
\end{align*}
which together with Theorem\,\ref{nrellich} establishes the claim.
\end{proof}

For the special case $M=\R^n$ and $\lambda=0$, the statement of Theorem\,\ref{Rellich-2nd} is contained in \cite{M93}.

\subsection{Applications of the Rellich identities}

In 1940 Rellich \cite{R40} dealt with the Dirichlet eigenvalue problem on sets $\Omega\subset\R^n$. For this special case he used  the identity derived in Theorem\,\ref{nrellich} to express the Dirichlet eigenvalues in terms of an integral over the boundary.
One decade ago, Liu~\cite{Liu07} extended Rellich's result to the Neumann eigenvalue problem, the clamped plate eigenvalue problem and the buckling eigenvalue problem, each on sets $\Omega\subset\R^n$. In the latter two cases Liu (implicitly) applied the higher order Rellich identity.      

\smallskip

Recall that for any bounded domain $\Omega\subset M$ with $C^2$ boundary $\partial \Omega$
the clamped plate eigenvalue problem and the buckling eigenvalue problem are given by 
 \begin{align}
 & \label{buck} \left\{ \begin{array}{ll} \Delta^2 u+\Lambda
  \Delta u=0 \quad \;\; &\mbox{in}\;\; \Omega,\\
  u={\partial_\nu u}=0 \quad \;\;  &  \mbox{on}\;\;
   \partial \Omega; \end{array} \right. &\qquad\qquad \mbox{Buckling problem\,,}\\
   & \label{clamp}   \left\{ \begin{array}{ll} \Delta^2 u
  -\Gamma^2 u=0 \quad \;\; &\mbox{in}\;\; \Omega,\\
  u={\partial_\nu u}=0\quad \;\;  &  \mbox{on}\;\;
  \partial \Omega; \end{array} \right.  &\qquad\qquad \mbox{Clamped plate\,,}&
       \end{align}
     respectively.

Below we reprove the result of Liu for the case of the buckling eigenvalue problem.
Note there is no new idea for the proof, however, our proof is shorter and clearer since we do not carry out the calculations in coordinates. One can proceed similarly for the clamped plate eigenvalue problem.

\smallskip

\begin{lemma}[\cite{Liu07}]\label{rell}
Let $\Omega\subset\R^n$ be a bounded domain with smooth boundary.
\begin{itemize}
\item[(i)]
Let $w$ be an eigenfunction corresponding to the eigenvalue $\Lambda$ of the buckling eigenvalue problem.
Then we have
\begin{align*}
\Lambda=\frac{\int_{\partial\Omega}({\partial_{\nu\nu}^2w})^2{\partial_\nu (r^2)}ds_g}{4\int_{\Omega}\lvert\nabla w\rvert^2dv_g},
\end{align*}
where $r^2=x_1^2+\dots+x_n^2$ and $x_i$ are Euclidean coordinates.
\item[(ii)]
Let $w$ be an eigenfunction corresponding to the eigenvalue $\Gamma$ of the clamped plate eigenvalue problem.
Then we have
\begin{align*}
\Gamma=\frac{\int_{\partial\Omega}({\partial_{\nu\nu}^2w})^2{\partial_\nu (r^2)}ds_g}{8\int_{\Omega}w^2dv_g}.
\end{align*}
\end{itemize}
\end{lemma}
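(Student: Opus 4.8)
\textbf{Proof proposal for Lemma \ref{rell}.}

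The plan is to apply the higher order Rellich identity (Theorem \ref{Rellich-2nd}) with the vector field $F=\nabla(r^2/2)=\nabla\rho_0$ (taking $p$ to be the Euclidean origin), so that $\langle F,\nu\rangle=\tfrac12\partial_\nu(r^2)$, and then use the clamped boundary conditions $w=\partial_\nu w=0$ on $\partial\Omega$ to kill almost every boundary term. For $M=\R^n$ this $F$ is an affine vector field: $\diver F=n$ and $DF=\mathrm{Id}$, so $DF(\nabla w,\nabla w)=|\nabla w|^2$ and $DF(\nabla w,\nabla\Delta w)=\langle\nabla w,\nabla\Delta w\rangle$, which makes the interior terms in Theorem \ref{Rellich-2nd} collapse dramatically. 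Concretely, $\tfrac12\int_\Omega\diver F(\Delta w)^2=\tfrac n2\int_\Omega(\Delta w)^2$, the two interior terms $\int_\Omega\diver F\langle\nabla w,\nabla\Delta w\rangle-2\int_\Omega DF(\nabla w,\nabla\Delta w)=(n-2)\int_\Omega\langle\nabla w,\nabla\Delta w\rangle$, and similarly $\tfrac\lambda2\int_\Omega\diver F|\nabla w|^2-\lambda\int_\Omega DF(\nabla w,\nabla w)=\lambda(\tfrac n2-1)\int_\Omega|\nabla w|^2$.

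For part (i) I set $\lambda=\Lambda$ and use $\Delta^2 w=-\Lambda\Delta w$, so the left-hand side $\int_\Omega(\Delta^2w+\Lambda\Delta w)\langle F,\nabla w\rangle\,dv_g$ vanishes identically. On the boundary, since $w\equiv0$ and $\partial_\nu w\equiv0$ on $\partial\Omega$, the full gradient $\nabla w$ vanishes on $\partial\Omega$ (tangential derivatives of $w$ vanish too), hence $\nabla w=(\partial_\nu w)\nu=0$ there; this annihilates every boundary integral in Theorem \ref{Rellich-2nd} that contains a factor of $\nabla w$, $\partial_\nu w$, or $\langle\nabla w,\nabla\Delta w\rangle$. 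The only surviving boundary term is $-\tfrac12\int_{\partial\Omega}(\Delta w)^2\langle F,\nu\rangle\,ds_g$, and on $\partial\Omega$ with $\nabla w=0$ one has $\Delta w=\partial^2_{\nu\nu}w$ (the tangential Hessian contributions and mean-curvature term all carry factors of $\nabla w$ or $w$, which vanish). Next I must handle the interior terms that do not obviously vanish: $\tfrac n2\int_\Omega(\Delta w)^2$, $(n-2)\int_\Omega\langle\nabla w,\nabla\Delta w\rangle$, and $\Lambda(\tfrac n2-1)\int_\Omega|\nabla w|^2$. Integrating by parts (all boundary terms vanish by the clamped conditions) gives $\int_\Omega\langle\nabla w,\nabla\Delta w\rangle=-\int_\Omega(\Delta w)^2$ and, using $\Delta^2w=-\Lambda\Delta w$ once more, also $\int_\Omega(\Delta w)^2=\int_\Omega w(-\Delta^2w)=\Lambda\int_\Omega(-w\Delta w)=\Lambda\int_\Omega|\nabla w|^2$. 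Substituting these relations, the coefficient of $\int_\Omega(\Delta w)^2$ becomes $\tfrac n2-(n-2)+(\tfrac n2-1)=1$, so the identity reduces to $0=\int_\Omega(\Delta w)^2-\tfrac12\int_{\partial\Omega}(\partial^2_{\nu\nu}w)^2\langle F,\nu\rangle\,ds_g$, i.e. $\int_\Omega|\nabla w|^2=\tfrac1\Lambda\int_\Omega(\Delta w)^2=\tfrac1{2\Lambda}\cdot\tfrac12\int_{\partial\Omega}(\partial^2_{\nu\nu}w)^2\partial_\nu(r^2)\,ds_g$, which rearranges to the claimed formula $\Lambda=\frac{\int_{\partial\Omega}(\partial^2_{\nu\nu}w)^2\partial_\nu(r^2)\,ds_g}{4\int_\Omega|\nabla w|^2\,dv_g}$.

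For part (ii) the argument is parallel: I apply Theorem \ref{Rellich-2nd} with $\lambda=0$ and $F=\nabla(r^2/2)$, use $\Delta^2w=\Gamma^2w$, and again all boundary terms except $-\tfrac12\int_{\partial\Omega}(\partial^2_{\nu\nu}w)^2\langle F,\nu\rangle\,ds_g$ die by the clamped conditions. The interior side is now $\tfrac n2\int_\Omega(\Delta w)^2+(n-2)\int_\Omega\langle\nabla w,\nabla\Delta w\rangle$ and the left-hand side is $\int_\Omega\Delta^2w\langle F,\nabla w\rangle=\Gamma^2\int_\Omega w\langle F,\nabla w\rangle=\tfrac{\Gamma^2}2\int_\Omega\langle F,\nabla(w^2)\rangle=-\tfrac{\Gamma^2}2\int_\Omega\diver F\,w^2=-\tfrac{n\Gamma^2}2\int_\Omega w^2$ (boundary term vanishes since $w=0$ on $\partial\Omega$). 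Using $\int_\Omega\langle\nabla w,\nabla\Delta w\rangle=-\int_\Omega(\Delta w)^2$ and $\int_\Omega(\Delta w)^2=\int_\Omega w\Delta^2w=\Gamma^2\int_\Omega w^2$, one collects terms to obtain $-\tfrac{n\Gamma^2}2\int_\Omega w^2=\big(\tfrac n2-(n-2)\big)\Gamma^2\int_\Omega w^2-\tfrac12\int_{\partial\Omega}(\partial^2_{\nu\nu}w)^2\langle F,\nu\rangle\,ds_g$; the $\int_\Omega w^2$ terms combine (coefficient $-\tfrac n2-\tfrac n2+(n-2)=-2$), giving $\tfrac12\int_{\partial\Omega}(\partial^2_{\nu\nu}w)^2\langle F,\nu\rangle\,ds_g=2\Gamma^2\int_\Omega w^2$, and since $\langle F,\nu\rangle=\tfrac12\partial_\nu(r^2)$ and $\Gamma^2\int_\Omega w^2=\int_\Omega(\Delta w)^2$ one can also express this as $\Gamma=\frac{\int_{\partial\Omega}(\partial^2_{\nu\nu}w)^2\partial_\nu(r^2)\,ds_g}{8\int_\Omega w^2\,dv_g}$ after dividing through by $\Gamma$.

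The routine-but-delicate point — and the step I expect to be the main obstacle — is the boundary bookkeeping: justifying carefully that $\nabla w=0$ on $\partial\Omega$ (so that the only surviving boundary term is the one with $(\Delta w)^2$), and that on $\partial\Omega$ one may replace $\Delta w$ by $\partial^2_{\nu\nu}w$. This uses that for a clamped eigenfunction the Hessian $\nabla^2 w$ restricted to $\partial\Omega$ has the form $\nabla^2w = (\partial^2_{\nu\nu}w)\,\nu\otimes\nu + (\text{terms involving } \nabla w|_{\partial\Omega}=0)$, so tracing gives $\Delta w = \partial^2_{\nu\nu}w$ on $\partial\Omega$; working coordinate-free with the second fundamental form of $\partial\Omega$, as in the proof of Theorem \ref{nrellich}, keeps this transparent and is exactly the improvement over Liu's coordinate computation. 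Beyond that, all the identifications $\int_\Omega\langle\nabla w,\nabla\Delta w\rangle=-\int_\Omega(\Delta w)^2$ and $\int_\Omega(\Delta w)^2=\Gamma^2\int_\Omega w^2$ (resp. $=\Lambda\int_\Omega|\nabla w|^2$) are straightforward integrations by parts with vanishing boundary terms, and the final arithmetic of combining coefficients is elementary.
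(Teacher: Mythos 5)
Your proposal for part (i) is correct and is essentially the paper's own proof: apply Theorem \ref{Rellich-2nd} with the position field $F=\nabla(r^2/2)$ (so $DF=g$, $\diver F=n$), observe that the clamped conditions force $\nabla w\equiv0$ on $\partial\Omega$ so that only the $-\tfrac12\int_{\partial\Omega}(\Delta w)^2\langle F,\nu\rangle$ boundary term survives, reduce the interior terms via $\int_\Omega\langle\nabla w,\nabla\Delta w\rangle=-\int_\Omega(\Delta w)^2$ and the variational identity $\int_\Omega(\Delta w)^2=\Lambda\int_\Omega|\nabla w|^2$, and finish with $\Delta w=\partial^2_{\nu\nu}w$, $\langle F,\nu\rangle=\tfrac12\partial_\nu(r^2)$ on $\partial\Omega$. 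The coefficient bookkeeping matches the paper's exactly.

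For part (ii), which the paper omits, your computation is sound up to the last line, but the final step does not work as written: your (correct) identity $\tfrac14\int_{\partial\Omega}(\partial^2_{\nu\nu}w)^2\,\partial_\nu(r^2)\,ds_g=2\Gamma^2\int_\Omega w^2\,dv_g$ yields $\Gamma^2=\int_{\partial\Omega}(\partial^2_{\nu\nu}w)^2\,\partial_\nu(r^2)\,ds_g\big/\bigl(8\int_\Omega w^2\,dv_g\bigr)$, and ``dividing through by $\Gamma$'' produces $\Gamma=\int_{\partial\Omega}(\partial^2_{\nu\nu}w)^2\,\partial_\nu(r^2)\,ds_g\big/\bigl(8\Gamma\int_\Omega w^2\,dv_g\bigr)$, not the stated formula. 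The discrepancy is a normalization mismatch in the source: the lemma's formula presupposes the convention $\Delta^2u=\Gamma u$ (as in Liu), whereas the paper's equation \eqref{clamp} writes $\Delta^2u-\Gamma^2u=0$. Under the latter convention the correct conclusion of your argument is the identity with $\Gamma^2$ on the left; you should state that rather than attempt to divide by $\Gamma$.
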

\begin{proof}
In order to prove $(i)$ we apply Theorem\,\ref{Rellich-2nd} for the special case $\Omega\subset\R^n$ and where $F$ is given by the gradient of the distance function. In this case we have $DF(\,\cdot\, ,\,\cdot\,)=g(\,\cdot\, ,\,\cdot\,)$ and $\diver F=n$.
Note furthermore that $w_{\lvert\partial\Omega}=0$ implies
$\nabla w={\partial_\nu w}\nu$ on $\partial\Omega$.
Since we have ${\partial_\nu w}_{\lvert\partial\Omega}=0$ by assumption, $\nabla w$ vanishes along the boundary of $\Omega$.

\smallskip

Plugging the above information into the higher order Rellich identity we get
\begin{align*}
0=\int_{\Omega}(\Delta^2w+\lambda \Delta  w) \langle F,\nabla w\rangle dv_g
=\frac{n}{2}\int_{\Omega}(\Delta w)^2 dv_g-\frac{1}{2}\int_{\partial\Omega}(\Delta w)^2 \langle F,\nu\rangle dv_g\\
+(n-2)\int_{\Omega}\langle\nabla w,\nabla \Delta  w \rangle dv_g+\Lambda(\frac{n}{2}-1)\int_{\Omega}\lvert \nabla w\lvert^2dv_g.
\end{align*}
Applying the divergence theorem once more, we thus obtain
\begin{align*}
\Lambda(\frac{n}{2}-1)\int_{\Omega}\lvert\nabla w\rvert^2dv_g=\frac{1}{2}\int_{\partial\Omega}(\Delta w)^2\langle F,\nu\rangle ds_g-(2-\frac{n}{2})\int_{\Omega}(\Delta w)^2dv_g.
\end{align*}
The variational characterization of $\Lambda$ asserts that for an eigenfunction $w$ corresponding to $\Lambda$ we have
\begin{align}
\label{var-buck}
\int_{\Omega}(\Delta w)^2dv_g-\Lambda\int_{\Omega}\lvert\nabla w\lvert^2dv_g=0.
\end{align}
Furthermore, the identities
$$\langle F,\nu\rangle=\sum_{i=1}^nx_i{\partial_\nu x_i}=\frac{1}{2}{\partial_\nu (r^2)}$$
and $\Delta w={\partial_{\nu\nu}^2w}$ hold on the boundary of $\Omega$. Thus the claim is established.

\smallskip

The proof of $(ii)$ is omitted since it is similar to the one of $(i)$.
\end{proof}

\begin{rem}
In Lemma\,\ref{rell} $(i)$, when normalizing the eigenfunction $w$ such that $\int_{\Omega}\lvert\nabla w\rvert^2dv_g=1$, we obtain
\begin{align*}
\Lambda=\frac{1}{4}\int_{\partial\Omega}({\partial_{\nu\nu}^2w})^2{\partial_\nu (r^2)}ds_g;
\end{align*}
i.e. $\Lambda$ is expressed in terms of an integral over the boundary.  A similar remark holds for Lemma\,\ref{rell}~$(ii)$.
\end{rem}

\bigskip

Finally we use the Rellich identities to get some estimates on
eigenvalues.
Note that from now on we do not assume anymore that $\Omega$ is a subset of the Euclidean space.
However, we assume that $\Omega$ is a manifold with smooth boundary and that there exists a vector field $F$ on $\Omega$ satisfying the following properties:
\begin{itemize}
\item[A)] $0<c_1\le\diver F\le c_2$, for some positive constants $c_1,c_2\in \R_+$,
\item[B)] $DF(X,X)\ge \alpha g(X,X)$ for some positive constant $\alpha \in \R_+$,\label{i-iii}
\item[C)] $\la F,\nu \ra\ge0$ on $\partial\Omega$. 
\end{itemize}

\begin{rem}
Domains in Hadamard manifolds, and free boundary minimal hypersufaces in the unit ball in $\R^{n+1}$ provide examples for which conditions A-C for the gradient of the distance function on $\Omega$ are satisfied.  For the latter see Example   \ref{minsurf} in which condition A with  $c_1=c_2$ holds.
\end{rem}

The following lemma is an easy consequence of Theorem\,\ref{nrellich} and Theorem\,\ref{Rellich-2nd}, respectively. 
It establishes upper estimates for eigenvalues in terms of integrals over the boundary $\partial \Omega$ and $\alpha$.

\begin{lemma}\label{3.10}
Assume that there exists a vector field $F$ on $\Omega\subset M^n$ satisfying properties~A-C above.
Then
\begin{itemize}
\item[(i)] the eigenvalue $\lambda$ corresponding to eigenfunction $w$ of the Dirichlet eigenvalue problem satisfies
\begin{align*}
\lambda\leq\frac{\int_{\partial\Omega}({\partial_\nu w})^2\langle F,\nu\rangle ds_g}{(2\alpha+c_1-c_2)\int_{\Omega w^2dv_g}};
\end{align*}
\item[(ii)]
the eigenvalue $\Lambda$ corresponding to eigenfunction $w$ of the buckling eigenvalue problem satisfies
\begin{align*}
\frac{\int_{\partial\Omega}(\Delta w)^2 \langle F,\nu\rangle dv_g}{2\alpha\int_{\Omega}\lvert \nabla w\lvert^2dv_g}\leq\Lambda
\end{align*}
provided $c_1=c_2=:c$ in property A.
\end{itemize}
\end{lemma}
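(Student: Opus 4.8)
The plan is to derive both inequalities directly from the Rellich identities by making the two natural choices of test function and vector field, and then discarding the terms that have a definite sign thanks to properties A--C.

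\textbf{Part (i).} I would apply Theorem~\ref{nrellich} with $w$ a Dirichlet eigenfunction for the eigenvalue $\lambda$ and with the given vector field $F$. The left-hand side $\int_\Omega(\Delta w+\lambda w)\langle F,\nabla w\rangle\,dv_g$ vanishes identically since $\Delta w+\lambda w=0$ in $\Omega$. On the right-hand side, the boundary condition $w|_{\partial\Omega}=0$ forces $\nabla w=(\partial_\nu w)\nu$ on $\partial\Omega$, so $\langle F,\nabla w\rangle=(\partial_\nu w)\langle F,\nu\rangle$ and $|\nabla w|^2=(\partial_\nu w)^2$ on the boundary; moreover the term $\frac{\lambda}{2}\int_{\partial\Omega}w^2\langle F,\nu\rangle\,ds_g$ drops out. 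Thus the identity collapses to
\[
0=\frac12\int_{\partial\Omega}(\partial_\nu w)^2\langle F,\nu\rangle\,ds_g+\frac12\int_\Omega\diver F\,|\nabla w|^2\,dv_g-\int_\Omega DF(\nabla w,\nabla w)\,dv_g-\frac{\lambda}{2}\int_\Omega w^2\diver F\,dv_g.
\]
Now I would use $DF(\nabla w,\nabla w)\ge\alpha|\nabla w|^2$ and $\diver F\le c_2$ to bound the middle interior terms, $\diver F\ge c_1$ on the last term, and the identity $\int_\Omega|\nabla w|^2\,dv_g=\lambda\int_\Omega w^2\,dv_g$. This yields
\[
0\ge \frac12\int_{\partial\Omega}(\partial_\nu w)^2\langle F,\nu\rangle\,ds_g+\Big(\tfrac{c_1}{2}-\alpha\Big)\lambda\int_\Omega w^2\,dv_g-\frac{\lambda c_2}{2}\int_\Omega w^2\,dv_g,
\]
which rearranges to $\lambda\,(2\alpha+c_1-c_2)\int_\Omega w^2\,dv_g\le \int_{\partial\Omega}(\partial_\nu w)^2\langle F,\nu\rangle\,ds_g$, i.e. the claimed bound (note $\langle F,\nu\rangle\ge0$ guarantees the boundary term is nonnegative, which is what lets us drop nothing incorrectly).

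\textbf{Part (ii).} Here I would apply the higher-order identity, Theorem~\ref{Rellich-2nd}, with $w$ a buckling eigenfunction for $\Lambda$, so that $\Delta^2 w+\Lambda\Delta w=0$ and the left-hand side vanishes. The Dirichlet-type boundary conditions $w|_{\partial\Omega}=\partial_\nu w|_{\partial\Omega}=0$ again imply $\nabla w=0$ on $\partial\Omega$, so \emph{every} boundary integral involving $\nabla w$ or $w$ vanishes, leaving only the $\int_{\partial\Omega}(\Delta w)^2\langle F,\nu\rangle$ term on the boundary. With $c_1=c_2=c$, the two interior terms $\frac12\int_\Omega\diver F(\Delta w)^2$ and (after a divergence-theorem step as in the proof of Theorem~\ref{Rellich-2nd}) $\int_\Omega\diver F\langle\nabla w,\nabla\Delta w\rangle$ can be handled, and one is left with an identity of the shape
\[
0=\big(\text{const}\big)\int_\Omega(\Delta w)^2\,dv_g-\frac12\int_{\partial\Omega}(\Delta w)^2\langle F,\nu\rangle\,ds_g-2\int_\Omega DF(\nabla w,\nabla\Delta w)\,dv_g+\Lambda(\cdots).
\]
The delicate point is the term $DF(\nabla w,\nabla\Delta w)$, which is not obviously sign-definite; I expect one combines it with $\int_\Omega\diver F\langle\nabla w,\nabla\Delta w\rangle$ and uses $c_1=c_2=c$ together with the variational identity $\int_\Omega(\Delta w)^2\,dv_g=\Lambda\int_\Omega|\nabla w|^2\,dv_g$ to eliminate the cross term, exactly as in Lemma~\ref{rell}(i) where $DF=g$ and $\diver F=n$ made this cancellation transparent. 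After that cancellation, applying $DF(\nabla w,\nabla w)\ge\alpha|\nabla w|^2$ to the surviving $\lambda$-block (here with $\lambda=\Lambda$) and keeping $\langle F,\nu\rangle\ge0$ on the boundary term gives the lower bound $\frac{1}{2\alpha}\int_{\partial\Omega}(\Delta w)^2\langle F,\nu\rangle\,dv_g\le\Lambda\int_\Omega|\nabla w|^2\,dv_g$, which is the assertion.

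\textbf{Main obstacle.} The routine part is substituting the boundary conditions; the genuinely delicate step is part (ii), namely correctly tracking and cancelling the terms $\int_\Omega\diver F\langle\nabla w,\nabla\Delta w\rangle\,dv_g$ and $\int_\Omega DF(\nabla w,\nabla\Delta w)\,dv_g$, which carry no a priori sign. The hypothesis $c_1=c_2=c$ in property~A is precisely what is needed here: it turns $\diver F$ into a constant that pulls out of the first of these integrals and lets one invoke the buckling variational identity to absorb the cross term, after which only sign-definite quantities remain and properties B and C finish the estimate. I would make sure to state explicitly that $c_1=c_2$ is used exactly at this cancellation, and to double-check the sign of the coefficient of $\int_\Omega(\Delta w)^2\,dv_g$ after the $\frac12\int_\Omega\la F,\nabla(\Delta w)^2\ra$ integration by parts so that no spurious sign error creeps in.
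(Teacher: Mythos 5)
Your part (i) follows the paper's argument exactly: apply Theorem~\ref{nrellich} to a Dirichlet eigenfunction, use $w|_{\partial\Omega}=0$ to reduce the boundary contribution to $\tfrac12\int_{\partial\Omega}(\partial_\nu w)^2\langle F,\nu\rangle\,ds_g$, then invoke A, B and $\int_\Omega|\nabla w|^2\,dv_g=\lambda\int_\Omega w^2\,dv_g$. Only your intermediate display is written with the wrong orientation: bounding $\diver F\le c_2$ on the $|\nabla w|^2$-term, $\diver F\ge c_1$ on the $w^2$-term and $DF\ge\alpha g$ yields $0\le \tfrac12\int_{\partial\Omega}(\partial_\nu w)^2\langle F,\nu\rangle\,ds_g+\lambda\bigl(\tfrac{c_2}{2}-\alpha-\tfrac{c_1}{2}\bigr)\int_\Omega w^2\,dv_g$, not the ``$0\ge$'' line with $c_1$ and $c_2$ interchanged that you wrote; your final rearrangement is nonetheless the correct statement, so this is a slip, not a gap.

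Part (ii) has a genuine gap at precisely the step you defer. The hypothesis $c_1=c_2=c$ together with $\int_\Omega\langle\nabla w,\nabla\Delta w\rangle\,dv_g=-\int_\Omega(\Delta w)^2\,dv_g$ (valid since $\partial_\nu w=0$) disposes of the term $\int_\Omega\diver F\,\langle\nabla w,\nabla\Delta w\rangle\,dv_g$, but it does nothing for $-2\int_\Omega DF(\nabla w,\nabla\Delta w)\,dv_g$: this involves the tensor $DF$ evaluated on two \emph{different} vectors, and property~B is only a one-sided bound on the associated quadratic form, so the ``cancellation as in Lemma~\ref{rell}(i)'' you invoke is not available --- there it worked only because $DF=g$ identically. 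The paper closes this step with the estimate $-2\int_\Omega DF(\nabla w,\nabla\Delta w)\,dv_g\le 2\alpha\int_\Omega(\Delta w)^2\,dv_g$, equivalently $\int_\Omega(DF-\alpha g)(\nabla w,\nabla\Delta w)\,dv_g\ge0$; after that, combining $(2\alpha-\tfrac c2)\int_\Omega(\Delta w)^2\,dv_g$ with $(\tfrac c2-\alpha)\Lambda\int_\Omega|\nabla w|^2\,dv_g$ via \eqref{var-buck} leaves exactly $\alpha\Lambda\int_\Omega|\nabla w|^2\,dv_g$ against the boundary term, giving the claim. This cross-term inequality is the one substantive input of the proof, and it does not follow from properties A--C by pure formalism (for a general $F$ the integrand has no sign); you must either prove it in your setting or state it as an additional hypothesis. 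As written, your argument does not reach the asserted lower bound on $\Lambda$.
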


\begin{proof}
We start by proving ${(i)}$. Theorem\,\ref{nrellich} implies
\begin{multline*}0=
\int_{\Omega}(\Delta w+\lambda w) \langle F,\nabla w\rangle dv_g
\le\int_{\partial\Omega}{\partial_\nu w}\langle F,\nabla w\rangle ds_g-\frac{1}{2}\int_{\partial\Omega}\vert\nabla w\vert^2 \langle F,\nu\rangle ds_g\\
+\frac{c_2}{2}\int_{\Omega}\lvert \nabla w\lvert^2dv_g-\int_{\Omega}DF(\nabla w,\nabla w)dv_g
-\frac{\lambda c_1}{2}\int_{\Omega}w^2\, dv_g.
\end{multline*}
Since $w\equiv 0$ on $\partial\Omega$ we have $\nabla w={\partial_\nu w}\nu$ on $\partial\Omega$.
Thus, we obtain
\begin{align*}
\frac{\lambda c_1}{2}\int_{\Omega}w^2 dv_g\leq
\frac{1}{2}\int_{\partial\Omega}({\partial_\nu w})^2 \langle F,\nu\rangle ds_g
+(\frac{\lambda c_2}{2}-\alpha\lambda)\int_{\Omega}w^2\, dv_g.
\end{align*}
The latter inequality implies the claim.

\smallskip

Below we prove $(ii)$. Theorem\,\ref{Rellich-2nd} implies
\begin{multline*}
0
=\frac{c}{2}\int_{\Omega}(\Delta w)^2 dv_g-\frac{1}{2}\int_{\partial\Omega}(\Delta w)^2 \langle F,\nu\rangle dv_g+c\int_{\Omega}\langle\nabla w,\nabla \Delta  w \rangle dv_g\\
-2\int_{\Omega}DF(\nabla w,\nabla \Delta  w)dv_g+\frac{c\Lambda}{2}\int_{\Omega}\lvert \nabla w\lvert^2dv_g-\Lambda\int_{\Omega}DF(\nabla w,\nabla w)dv_g\\\leq
(2\alpha-\frac{c}{2})\int_{\Omega}(\Delta w)^2 dv_g-\frac{1}{2}\int_{\partial\Omega}(\Delta w)^2 \langle F,\nu\rangle dv_g+(\frac{c\Lambda}{2}-\Lambda\alpha)\int_{\Omega}\lvert \nabla w\lvert^2dv_g,
\end{multline*}
where we made use of 
$$\int_{\Omega}\langle\nabla w,\nabla \Delta  w \rangle dv_g=-\int_{\Omega}(\Delta  w)^2dv_g,$$
what is a consequence of the divergence theorem.
Applying (\ref{var-buck}) yields
\begin{align*}
0\leq -\frac{1}{2}\int_{\partial\Omega}(\Delta w)^2 \langle F,\nu\rangle dv_g+\Lambda\alpha\int_{\Omega}\lvert \nabla w\lvert^2dv_g,
\end{align*}
and thus the claim is established.
\end{proof}

\section{Proof of the Main Theorems}
\label{KSinequalities} 

\smallskip

%
%

\bff[Proof of Theorem \ref{kutsig1}]  Inequalities $(a)$ and $(b)$ are an immediate consequence of the variational characterizations of $\mu_k$, $\sigma_k$ and $\xi_k$ given in \eqref{neuvar}, \eqref{stekvar} and \eqref{bihar2var}. Indeed, let $V$ be the space generated by  eigenfunctions associated with $\xi_2,\ldots,\xi_k$ with $\int_{\partial\Omega} u=0$, for every $u\in V$. Then by the variational characterization \eqref{neuvar} we get
\bq\mu_k&\le& \sup_{0\neq u\in V}\frac{\int_\Omega (\Delta u)^2\,dv_g}{\int_{\Omega}|\nabla u|^2\,dv_g}\le\xi_k \sup_{0\neq u\in V}\frac{\int_{\partial\Omega} u^2\,dv_g}{\int_{\Omega}|\nabla u|^2\,dv_g}\\
&=&\xi_k\left(\inf_{0\neq u\in V}\frac{\int_{\Omega}|\nabla u|^2\,dv_g}{\int_{\partial\Omega} u^2\,dv_g}\right)^{-1}\le \frac{\xi_k}{\sigma_2}.
\eq
The proof of part $(b)$ is similar and we leave it to the reader. \end{proof}
\bff[Proof of Theorem \ref{kutsig2}] 
We use  the following identity
\begin{align*}
\frac{1}{2}\int_{\partial\Omega}w^2\langle \nu,\nabla \rho_p\rangle ds_g=
\int_{\Omega}w\langle \nabla w,\nabla \rho_p\rangle dv_g+\frac{1}{2}\int_{\Omega}w^2\Delta\rho_p dv_g
\end{align*}
which follows easily from integration by parts.
Using the Laplace comparison theorem, we thus get
\begin{align}
\label{e1}
\frac{1}{2}\int_{\partial\Omega}w^2\langle \nu,\nabla \rho_p\rangle ds_g\leq
\int_{\Omega}w\langle \nabla w,\nabla \rho_p\rangle dv_g+\frac{1}{2}\max_{x\in\Omega}(1+d_p(x)H_{\kappa_1}(d_p(x)))\int_{\Omega}w^2dv_g.
\end{align}
The Cauchy Schwarz inequality yields
\begin{align*}
\big(\int_{\Omega}w\langle \nabla w,\nabla \rho_p\rangle dv_g\big)^2\leq r_{\max}^2\int_{\Omega}w^2dv_g\int_{\Omega}\lvert\nabla w\lvert^2dv_g.
\end{align*}
Assuming $\int_{\Omega}wdv_g=0$ and using the variational characterisation of $\mu_2$ we get
\begin{align*}
\int_{\Omega}w\langle \nabla w,\nabla \rho_p\rangle dv_g\leq r_{\max}\mu_2^{-1/2}\int_{\Omega}\lvert\nabla w\lvert^2dv_g.
\end{align*}
Thus, from inequality~\eqref{e1}, we get
\begin{align}
\label{e2}
\frac{1}{2}\int_{\partial\Omega}w^2\langle \nu,\nabla \rho_p\rangle ds_g\leq
(r_{\max}\mu_2^{-1/2}
+\frac{1}{2}\max_{x\in\Omega}(1+d_p(x)H_{\kappa}(d_p(x)))\mu_2^{-1})\int_{\Omega}\lvert\nabla w\lvert^2dv_g.
\end{align}
Let $u$ be an eigenfunction associated to the eigenvalue $\sigma_2$ and choose $w$ to be $$w:=u-\vol(\Omega)^{-1}\int_{\Omega}udv_g.$$ 
Then we have
\begin{align*}
\int_{\Omega}\lvert\nabla w\lvert^2 dv_g=\int_{\Omega}\lvert\nabla u\lvert^2 dv_g=\sigma_2\int_{\partial\Omega}u^2ds_g\leq\sigma_2\int_{\partial\Omega}w^2ds_g.
\end{align*}
Combining this inequality with (\ref{e2}), we finally get
\begin{align*}
\frac{1}{2}h_{\min}\int_{\partial\Omega}w^2ds_g&\leq \frac{1}{2}\int_{\partial\Omega}w^2\langle \nu,\nabla \rho_p\rangle ds_g\\&\leq (r_{\max}\mu_2^{-1/2}
+\frac{1}{2}\max_{x\in\Omega}(1+d_p(x)H_{\kappa}(d_p(x)))\mu_2^{-1})\int_{\Omega}\lvert\nabla w\lvert^2dv_g\\&\leq (r_{\max}\mu_2^{-1/2}
+\frac{1}{2}\max_{x\in\Omega}(1+d_p(x)H_{\kappa}(d_p(x)))\mu_2^{-1})\sigma_2\int_{\partial\Omega}w^2ds_g.
\end{align*}
Thus the claim is established.
\end{proof}
\bff[Proof of Theorem \ref{kutsig3}] Throughout the proof we repeatedly use the Hessian and Laplace comparison theorems as well as the generalized Rellich identity, i.e. Theorem\,\ref{nrellich}.
\begin{itemize}[leftmargin=*]
\item[\it i)] We start by proving the first inequality in $i)$, namely
${C_1\eta_m}/{h_{\max}\le \lambda_k}.$
Let $E_k$ be the eigenspace associated with $\lambda_k$ and let $u_1,\cdots,u_m$ be an orthonormal basis for $E_k$.\\ The functions ${\partial_\nu u_1}, \cdots, {\partial_\nu u_m}$ are linearly independent on $\partial\Omega$. Indeed, if there exist $u\in\Span({\partial_\nu u_1}, \cdots, {\partial_\nu u_m})$ such that ${\partial_\nu u}=0$, then we define
\[v(x)=
\begin{cases}
u(x)&\mbox{if}\,\, x\in \Omega,\\
0&\mbox{if}\,\, x\in M\setminus\Omega.
\end{cases}
\]
Clearly, we have $v\in H^1(M)$.
Furthermore, $v$ satisfies the identity $\Delta v=\lambda_kv$. Since $v\equiv0$ on $M\setminus \Omega$ we get $v\equiv0$ on $M$ by the unique continuation theorem. Thus, we can consider $E_k$ as a test functional space in \eqref{bihar1var}.\\
 Let $h_{\max}=\sup_{x\in\partial\Omega}\la\nabla\rho_p,\nu\ra$. 
Since $0<\frac{1}{h_{\max}}\la\nabla\rho_p,\nu\ra\le1$, we get 
\bq\eta_m&\le&\sup_{u\in E_k}\frac{\int_\Omega |\Delta u|^2\,dv_g}{\int_{\partial\Omega}({\partial_\nu u})^2\,ds_g}\le h_{\max}\lambda_k^2\sup_{u\in E_k}\frac{\int_\Omega u^2\,dv_g}{\int_{\partial\Omega}\la\nabla\rho_p,\nu\ra({\partial_\nu u})^2\,ds_g}.
\eq
Next we bound the denominator from below. Applying Theorem \ref{nrellich} with $\lambda=0$ and $F=\nabla\rho_p$ yields
\bq
\int_{\partial\Omega}\la\nabla\rho_p,\nu\ra({\partial_\nu u})^2\,ds_g&=&2\int_{\Omega}\Delta u\langle \nabla \rho_p,\nabla u\rangle dv_g-\int_{\Omega}\Delta \rho_p\lvert \nabla u\lvert^2dv_g+2\int_{\Omega}\nabla^2\rho_p(\nabla u,\nabla u)dv_g.
\eq
Using $u\in E_k$ and integration by parts we get
\begin{align*}
2\int_{\Omega}\Delta u\langle \nabla \rho_p,\nabla u\rangle dv_g=-\lambda_k\int_{\Omega}\langle \nabla \rho_p,\nabla u^2\rangle dv_g=\lambda_k\int_{\Omega}u^2\Delta \rho_p dv_g.
\end{align*}
Consequently, we have
\bq
\int_{\partial\Omega}\la\nabla\rho_p,\nu\ra({\partial_\nu u})^2\,ds_g&=&\lambda_k\int_{\Omega}u^2\Delta \rho_p dv_g-\int_{\Omega}\Delta \rho_p\lvert \nabla u\lvert^2dv_g+2\int_{\Omega}\nabla^2\rho_p(\nabla u,\nabla u)dv_g\\
&\ge&\lambda_k\left(1+\min_{x\in\Omega}d_p(x)H_{\kappa_2}(d_p(x))\right)\int_{\Omega}u^2 dv_g\\
&&-\left(1+\max_{x\in\Omega}d_p(x)H_{\kappa_1}(d_p(x))\right)\int_{\Omega}|\nabla u|^2 dv_g
\\&&+2 \min_{x\in\Omega}\frac{d_p(x)H_{\kappa_2}(d_p(x))}{n-1}\int_{\Omega}|\nabla u|^2 dv_g\\
&=&\lambda_kC_1\int_{\Omega}u^2 dv_g.
\eq
In the second line  we used the Hessian and Laplace comparison theorems; see Section \,\ref{prelim}.
Here $C_1$ is
\begin{equation}\label{thetaom}C_1:=\left(1+\frac{2}{n-1}\right)\min_{r\in[0,r_{\max})}rH_{\kappa_2}(r)- \max_{r\in[0,r_{\max})}rH_{\kappa_1}(r).\end{equation}
Therefore, we get
\[{C_1}\eta_m\le {h_{\max}\lambda_k}.\]
We conclude the proof of the first inequality with a remark on the sign of $C_1$. The function $rH_{\kappa}(r)$ is constant if $\kappa=0$, increasing on $[0,\infty)$ if $\kappa<0$, and decreasing on $[0,\infty)$ if $\kappa>0$. Thus we calculate $C_1$ considering the following different cases: \smallskip
\begin{itemize}
\item[(a)] If $\kappa_1=\kappa_2=0$, then $C_1=2$.
\item[(b)] If $\kappa_1\le\kappa_2\le0$, then $C_1=n+1-r_{\max}H_{\kappa_1}(r_{\max})$. 
\item[(c)] If  $0\le\kappa_1\le\kappa_2$, then $C_1=\left(1+\frac{2}{n-1}\right)r_{\max}H_{\kappa_2}(r_{\max})-(n-1)$. 
\item[(d)]  If $\kappa_1\le0\le\kappa_2$, then $C_1=\left(1+\frac{2}{n-1}\right)r_{\max}H_{\kappa_2}(r_{\max})-r_{\max}H_{\kappa_1}(r_{\max})$.
\end{itemize}
\smallskip
Of course when $C_1\le0$, we only get a trivial bound. However, depending on $\kappa_1$ and $\kappa_2$,   in all cases, there exists $r_0\in(0,\infty]$ such that  for   $r_{\max}<r_0$,  $C_1$ is positive. 
\par
\medskip
We proceed with the proof of the second inequality of part $i)$. Let $u_1,\cdots,u_k\in H^2(\Omega)$ be a family of  eigenfunctions associated to $\eta_1,\cdots,\eta_k$. We can choose $u_1,\cdots,u_k$ such that ${\partial_\nu u_1}, \cdots, {\partial_\nu u_k}$ are orthonormal in $L^2(\partial\Omega)$. Then, due to \eqref{dirvar} and \eqref{bihar1var}, we have
\ben{ah-0}\lambda_k\le\eta_k\sup_{u\in E_k}\frac{\int_{\partial\Omega}({\partial_\nu u})^2\,ds_g}{\int_{\Omega}|\nabla u|^2\,dv_g},\een
where $E_k:= \Span(u_1,\cdots,u_k)$.
Applying Theorem \ref{nrellich} with $\lambda=0$ and $F=\nabla\rho_p$ we get
\bq
\int_{\partial\Omega}\la\nabla \rho_p,\nu\ra({\partial_\nu u})^2\,ds_g&=&2\int_{\Omega}\Delta u\langle \nabla \rho_p,\nabla u\rangle dv_g-\int_{\Omega}\Delta \rho_p\lvert \nabla u\lvert^2dv_g+2\int_{\Omega}\nabla^2\rho_p(\nabla u,\nabla u)dv_g\\
&\le& 2\max_{x\in\Omega}|\nabla \rho_p|\left(\int_{\Omega}(\Delta u)^2dv_g\int_{\Omega}|\nabla u|^2 dv_g\right)^{1/2}\\
&&+\left(-1-\min_{x\in\Omega}d_p(x)H_{\kappa_2}(d_p(x))+2\max_{x\in\Omega}\frac{d_p(x)H_{\kappa_1}(d_p(x))}{n-1}\right)\int_{\Omega}|\nabla u|^2 dv_g\\
&\le&2r_{\max}\eta_k^{\frac{1}{2}}\left(\int_{\partial\Omega}({\partial_\nu u})^2\,ds_g\int_{\Omega}|\nabla u|^2 dv_g\right)^{1/2}
-C_2\int_{\Omega}|\nabla u|^2 dv_g,
\eq
where \begin{equation}\label{comega}C_2:=1+\min_{x\in\Omega}d_p(x)H_{\kappa_2}(d_p(x))-2   \max_{x\in\Omega}\frac{d_p(x)H_{\kappa_1}(d_p(x))}{n-1}.\end{equation} Let $A^2:=\frac{\int_{\partial\Omega}({\partial_\nu u})^2\,ds_g}{\int_{\Omega}|\nabla u|^2 dv_g}$. From the above inequality, $A$ satisfies
\[h_{\min}A^2\le 2r_{\max}\eta_k^{\frac{1}{2}}A-C_2.\]
This implies $$r_{\max}^2\eta_k-h_{\min}C_2\ge0,$$
Remark that since this is true for every $k$, we get  in particular
\bqn{eta1}\eta_1\ge\frac{h_{\min}C_2}{r^2_{\max}}.\eqn 
We now obtain the following upper bound on $A^2$
\[A^2\le\frac{\left(r_{\max}\eta_k^{\frac{1}{2}}+\sqrt{r_{\max}^2\eta_k-C_2 h_{\min}}\right)^2}{h_{\min}^2}\le
\frac{4r_{\max}^2\eta_k-2C_2 h_{\min}}{h_{\min}^2}.\]
Replacing in \eqref{ah-0} we conclude 
\[\lambda_k\le 
\frac{4r_{\max}^2\eta_k^2-2C_2 h_{\min}\eta_k      }{h_{\min}^2}.\]
\begin{rem}
The function $rH_{\kappa}(r)$ is constant if $\kappa=0$, increasing on $[0,\infty)$ if $\kappa<0$, and decreasing on $[0,\infty)$ if $\kappa>0$. We calculate $C_2$ considering different cases:
\begin{itemize}\label{escomega}
\item[(a)] If $\kappa_1=\kappa_2=0$, then $C_2=n-2$.
\item[(b)] If $\kappa_1\le\kappa_2\le0$, then $C_2=n-2 \frac{r_{\max}H_{\kappa_1}(r_{\max})}{n-1}$. 
\item[(c)] $0\le\kappa_1\le\kappa_2$.  Then $C_2=r_{\max}H_{\kappa_2}(r_{\max})-1$.
\item[(d)]  $\kappa_1\le0\le\kappa_2$.  Then $C_2=1+r_{\max}H_{\kappa_2}(r_{\max})-2 \frac{r_{\max}H_{\kappa_1}(r_{\max})}{n-1}$.
\end{itemize}
Depending on $\kappa_1$ and $\kappa_2$,   in all cases , there exists $r_0\in(0,\infty]$ so that  when $r_{\max}<r_0$, then $C_2$ is positive. 

\end{rem}
\smallskip

\item[\it ii)]
Let $\phi>0$ be a continuous function on $\partial\Omega$. For every $l\in \N$ set
\bq
\xi_{l+1}(\phi)&:=&\inf_{\substack{V\subset \tilde H_{N,\phi}^2(\Omega)\\\dim V=l}} \sup_{u\in V}\frac{\int_\Omega |\Delta u|^2\,dv_g}{\int_{\partial\Omega}u^2\phi\,ds_g},\qquad \xi_1(\phi)=0,\\
\eq
where $\tilde H_{N,\phi}^2(\Omega):=\{u\in H^2(\Omega)\,:\, {\partial_\nu u}=0\,\,\text{on $\partial \Omega$ and $\int_{\partial \Omega}\phi uds_g=0$}\}$. The following relation between $\xi_{l}$ and $\xi_l(\phi)$ holds:
\ben{bb}\xi_l\le \|\phi\|_\infty \xi_l(\phi).\een
Indeed, let $V=\Span(v_1,\cdots,v_l)$ be a subspace of $\tilde H_{N,\phi}^2(\Omega)$ of dimension $l$.
The functional space  $W=\Span(w_1,\cdots,w_l)$, where $w_j=v_j-\frac{1}{\vol(\partial\Omega)}\int v_j\,ds_g$, is an $l$-dimensional subspace of $\tilde H_{N}^2(\Omega):=\{u\in H^2(\Omega)\,:\, {\partial_\nu u}=0\,\,\text{on $\partial \Omega$ and $\int_{\partial \Omega}uds_g=0$}\}$ since $1\notin V$. It is easy to check that for every $v\in \tilde H_{N,\phi}^2(\Omega)$ and $w=v-\frac{1}{\vol(\partial\Omega)}\int v\,ds_g$ we have
\[\frac{\int_\Omega |\Delta w|^2\,dv_g}{\|\phi\|_\infty\int_{\partial\Omega}w^2\,ds_g}\le\frac{\int_\Omega |\Delta v|^2\,dv_g}{\int_{\partial\Omega}v^2\phi\,ds_g},\]
and inequality \eqref{bb} follows.
Later on we take $\phi:=\la\nabla \rho_p,\nu\ra$. Thus, it is enough to  show that  
\[\xi_{m+1}(\phi)\le \frac{\mu_k^2}{(C_3-n^{-1}\mu_kr_{\rm in}^2)\vee0},
\]
for some constants $C_3$.
Let $E_k$ be the eigenspace associated with $\mu_k$, $k\ge2$, and $u_1,\cdots,u_m$ be an orthonormal basis for $E_k$. Let $F$ be a vector field on $\Omega$ satisfying  properties A--C on page \pageref{i-iii}.
Consider
$$v_j:= u_j-\frac{1}{\int_\Omega\diver F\,dv_g}\int_{\partial\Omega}u_j\la F,\nu \ra ds_g,\qquad j=1,\cdots,m.$$
The functional space $V=\Span(v_1,\ldots,v_m)$ forms an $m$-dimensional subspace of $\tilde H_{N,\phi}^2(\Omega)$, where $\phi:=\la F,\nu \ra$. 
\bq
\xi_{m+1}(\phi)\le \sup_{v\in V}\frac{\int_\Omega |\Delta v|^2\,dv_g}{\int_{\partial\Omega}v^2 \la F,\nu \ra\,ds_g}= \sup_{u\in E_k}\frac{\mu_k^2\int_\Omega  u^2\,dv_g}{\int_{\partial\Omega}u^2\la F,\nu \ra\,ds_g-({\int_\Omega\diver F\,dv_g})^{-1}\left(\int_{\partial\Omega}u\la F,\nu \ra ds_g\right)^2}.\eq
By the Green formula and Theorem \ref{nrellich}, we get 
\bq
\int_{\partial\Omega}u^2\la F,\nu \ra\,ds_g&=&2\int_\Omega u\la\nabla u, F\ra dv_g+\int_\Omega u^2\diver F dv_g \\
&=&2\mu_k^{-1}\int_\Omega \Delta u\la\nabla u, F\ra dv_g+\int_\Omega u^2\diver F dv_g\\
&=& \mu_k^{-1}\left(\int_{\partial\Omega}\vert\nabla u\vert^2 \langle F,\nu\rangle ds_g-\int_{\Omega}\diver F\lvert \nabla u\lvert^2dv_g+2\int_{\Omega}DF(\nabla u,\nabla u)dv_g\right)\\
&&+\int_\Omega u^2\diver F dv_g\\
&\ge&\mu_k^{-1} \int_{\partial\Omega}\vert\nabla u\vert^2 \langle F,\nu\rangle ds_g+(c_1-c_2+2\alpha)\int_\Omega u^2\,dv_g\\&\ge&(c_1-c_2+2\alpha)\int_\Omega u^2\,dv_g.
\eq
We also have 
\bq
\left(\int_{\partial\Omega}u\la F,\nu \ra ds_g\right)^2&=&\left(\int_\Omega\la F,\nabla u\ra\,dv_g\right)^2\le\int_\Omega|F|^2\,dv_g\int_\Omega|\nabla u|^2\,dv_g\\
&=&\mu_k\int_\Omega|F|^2\,dv_g\int_\Omega u^2\,dv_g.
\eq
Therefore,
\bq
\xi_{m+1}(\phi)\le
\frac{\mu_k^2}{((c_1-c_2+2\alpha)-c_1^{-1}\vol(\Omega)^{-1}\mu_k\int_\Omega|F|^2\,dv_g)\vee0}.
\eq
Thanks to the Laplace and Hessian comparison theorem, the vector field $F=\nabla\rho_p$ satisfies properties $A-C$ (see page \pageref{i-iii}) on $\Omega$ with $\alpha=1$, and $$c_1= n,\quad\quad c_2=1+\max_{r\in[0,r_{\max})} rH_{\kappa}(r)=1+r_{\max}H_{\kappa}(r_{\max}).$$   Taking 
\bqn{ctilde}C_3:=n+1-r_{\max}H_{\kappa}(r_{\max}),\eqn
 we get
\bq
\xi_{m+1}(\phi)\le
\frac{\mu_k^2}{(C_3- n^{-1}\vol(\Omega)^{-1}\mu_k\int_\Omega d_p^2\, dv_g)\vee0}
\eq
 which completes the proof.  
\end{itemize}
\ef
Finally, we provide  examples for Theorem \ref{kutsig3} $(ii)$ in which vector fields satisfying conditions A-C arise naturally. The first example is just a special case  of Theorem \ref{kutsig3} $(ii)$. 

\begin{example}\label{rncase}
Let $\Omega$ be a star-shaped domain $\Omega$ in $\R^n$ with respect to the origin. Thus $F(x)=x$ satisfies properties $A-C$ above on $\Omega$ for $\alpha=1$ and $c_1=c_2=n$.  Then by Theorem \ref{kutsig3} part $ii$ we have
\bq
\xi_{m+1}\le
\frac{\max_{x\in \partial\Omega}\la x,\nu\ra\mu_k^2}{(2-n^{-1}\vol(\Omega)^{-1}\mu_kI_2(\Omega))\vee0},
\eq
where $m$ is the multiplicity of $\mu_k$ and $I_2(\Omega)=\int_\Omega|x|^2\,dv_g$ is the second moment of inertia. \\ If in addition the origin is also the centroid of $\Omega$, i.e. $\int_\Omega xdv_g=0$, then we have
\bq
\xi_{m_0+1}\le
{\max_{x\in \partial\Omega}\la x,\nu\ra\mu_2^2}, \eq 
where $m_0$ denotes the multiplicity of $\mu_2$. 
Combining this inequality with Theorem\,\ref{kutsig1} $(b)$ we get 
$$\sigma_{m_0+1}\le {\max_{x\in \partial\Omega}\la x,\nu\ra}{\mu_2}.$$
These two last inequalities has been previously obtained in \cite{KS68} for the special case $n=2$.
\end{example}

\begin{example}\label{minsurf} Let $\bf B^{n+1}$ be the unit ball in $\R^{n+1}$ centered at the origin, and $\Omega$ be a  free boundary minimal hypersurface in $\bf B^{n+1}$.  Consider $F(x)=x$, or equivalently $\rho_0(x)=\rho(x)=\frac{|x|^2}{2}$.  It is well-known that the coordinate functions of $\R^{n+1}$ are harmonic on $\Omega$. Hence  $$\diver F=\Delta\rho=n.$$
Also, by the definition of a free boundary minimal hypersurface, we have $\la\nabla\rho,\nu\ra=1$ on $\partial \Omega$. Thus,  conditions A and C on page \pageref{i-iii} are satisfied.  
To verify condition B, one can show that the eigenvalues of $\nabla^2\rho$ at point $x\in\Omega$ are given by $1-\kappa_i\la x, N(x)\ra$, $i=1,\cdots, n$, where $N(x)$ is the unit normal  to the $\Omega$ such that $N|_{\partial\Omega}=\nu$, and $\kappa_i$ are principal curvatures. Indeed, let $X,Y\in T_x\Omega$. Then we have
\begin{eqnarray*}
\nabla^2\rho(x)(X,Y)&=&X\cdot(Y\cdot\rho(x))-\nabla_XY\cdot\rho(x)\\
&=&X\la x,Y\ra-\la x,\nabla_XY\ra\\
&=&\la X,Y\ra+\la x,D_XY\ra-\la x,\nabla_XY\ra\\
&=&\la X,Y\ra-\la x,\la S(X),Y\ra N(x)\ra\\
&=&\la X-S(X),Y\ra\la x,N(x)\ra,
\end{eqnarray*} 
where $\la\cdot,\cdot\ra$ is the Euclidian inner product, $\nabla$ is the induced connection on $\Omega$, $D$ is the Euclidean connection (or simply the differentiation) on $\R^{n+1}$, and $S(x)$ is the shape operator 
$$S:T_x\Omega\to T_x\Omega,\quad X\mapsto \nabla_XN.$$
Then the eigenvalues of $\nabla^2\rho(x)$ are of the form  $1-\kappa_i(x)\la x,N(x)\ra$, $i=1,\ldots,n$. Define
$$\alpha:=\min_{\substack{i=1,\ldots,n\\x\in \Omega}}(1-\kappa_i\la x, N(x)\ra).$$
When $\alpha>0$, then $\Omega$ with vector field $F$ as above satisfies all three conditions A-C on page \pageref{i-iii}. In particular, following the proof of Theorem\,\ref{kutsig3} $ii$, we get
\bq
\xi_{m+1}\le
\frac{\mu_k^2}{(2\alpha-n^{-1}\vol(\Omega)^{-1}\mu_k\int_\Omega|x|^2\,dv_g)\vee0}.
\eq

In dimension two, $\alpha>0$ is equivalent to $|\kappa_i|\la x, N(x)\ra<1$. By results in \cite{AN16}, if $|\kappa_i|\la x, N(x)\ra<1$ then $\la x, N(x)\ra\equiv0$ on $\Omega$, and $\Omega$ is the equilateral disk. Hence, there is no nontrivial 2-dimensional minimal surface satisfying condition A-C.   It is an intriguing question whether there are non-trivial minimal hypersurfaces with $\alpha>0$ in higher dimensions. 
\end{example}


\end{document}